\newtheorem{thm}{Theorem}[section]
\newtheorem{lemma}[thm]{Lemma}
\theoremstyle{remark}
\newcommand{\id}{{\rm{id}}}
\newcommand{\Ad}{{\rm{Ad}}}
\newcommand{\BN}{\mathbf N}
\newcommand{\BC}{\mathbf C}
\newcommand{\BZ}{\mathbf Z}
\newcommand{\BK}{\mathbf K}
\newcommand{\BT}{\mathbf T}
\newcommand{\la}{\langle}
\newcommand{\ra}{\rangle}
\newcommand{\Pic}{{\rm{Pic}}}
\newcommand{\Aut}{{\rm{Aut}}}
\newcommand{\Int}{{\rm{Int}}}
\newcommand{\Ima}{{\rm{Im}}}
\newcommand{\Ker}{{\rm{Ker}}}
\newtheorem{Def}{Definition}[section]
\title{The Picard groups of inclusions of $C^*$-algebras induced by equivalence bimodules}
\author{Kazunori Kodaka}
\address{Department of Mathematical Sciences, Faculty of Science, Ryukyu
\endgraf
University, Nishihara-cho, Okinawa, 903-0213, Japan}
\address{\sl{E-mail address}: \rm{kodaka@math.u-ryukyu.ac.jp}}
\keywords{crossed products, equivalence bimodules, inclusions of $C^*$-algebras,
strong Morita equivalence, Picard groups}
\subjclass[2010]{46L05}
\begin{document}
\begin{abstract}
Let $A$ and $B$ be $\sigma$-unital $C^*$-algebras and $X$ and $Y$ an $A-A$-equivalence
bimodule and a $B-B$-equivalence bimodule, respectively. Also, let $A\rtimes_X \BZ$ and
$B\rtimes_Y \BZ$ be the crossed products of $A$ and $B$ by $X$ and $Y$, respectively.
Furthermore, let $A\subset A\rtimes_X \BZ$ and $B\subset B\rtimes_Y \BZ$ be the inclusions of
$C^*$-algebras induced by $X$ and $Y$, respectively.
We suppose that $A' \cap M(A\rtimes_X \BZ)=\BC 1$. In this paper we shall show that the inclusions
$A\subset A\rtimes_X \BZ$ and $B\subset B\rtimes_Y \BZ$ are strongly Morita equivalent if and only if
there is an $A-B$-equivalence bimodule $M$ such that $Y\cong \widetilde{M}\otimes_A X \otimes_A M$
or $\widetilde{Y}\cong \widetilde{M}\otimes_A X \otimes_A M$ as $B-B$-equivalence bimodules,
where $\widetilde{M}$ and $\widetilde{Y}$ are the dual $B-A$-equivalence bimodule and
the dual $B-B$-equivalence bimodule of $M$ and $Y$, respectively.
Applying this result, we shall compute the Picard group of the inclusion $A\subset A\rtimes_X \BZ$
under the assumption that $A' \cap M(A\rtimes_X \BZ)=\BC 1$. 
\end{abstract}

\maketitle

\section{Introduction}\label{sec:intro} In the previous paper \cite {Kodaka:bundle}, we discussed
strong Morita equivalence for unital inclusions of unital $C^*$-algebras induced by involutive
equivalence bimodules. That is, let $A$ and $B$ be unital $C^*$-algebras and $X$ and $Y$ an
involutive $A-A$-equivalence bimodule and an involutive $B-B$-equivalence bimodule, respectively.
Let $C_X$ and $C_Y$ be unital $C^*$-algebras  induced by $X$ and $Y$ and $A\subset C_X$ and
$B\subset C_Y$ the unital inclusions of unital $C^*$-algebras. We suppose that $A' \cap C_X =\BC 1$.
In the paper \cite {Kodaka:bundle}, we showed that $A\subset C_X$ and $B\subset C_Y$ are
strongly Morita equivalent if and only if there is an $A-B$-equivalence bimodule $M$ such that
$Y\cong \widetilde{M}\otimes_A X\otimes_A M$ as $B-B$-equivalence bimodules. In the present paper,
we shall show the same result as above in the case of inclusions of $C^*$-algebras induced by
$\sigma$-unital $C^*$-algebra equivalence bimodules.
\par
Let $A$ and $B$ be $\sigma$-unital $C^*$-algebras and $X$ and $Y$ an $A-A$-equivalence bimodule
and a $B-B$-equivalence bimodule, respectively. Let $A\rtimes_X \BZ$ and $B\rtimes_Y \BZ$
be the crossed products of $A$ and $B$ by $X$ and $Y$, respectively, which are defined in Abadie,
Eilers and Exel \cite {AEE:crossed}. Then we have inclusions of $C^*$-algebras $A\subset A\rtimes_X \BZ$
and $B\subset B\rtimes_Y \BZ$. We note that $\overline{A(A\rtimes_X \BZ)}=A\rtimes_X \BZ$ and
$\overline{B(B\rtimes_Y \BZ)}=B\rtimes_Y \BZ$. We suppose that $A' \cap M(A\rtimes_X \BZ)=\BC 1$.
We shall show that $A\subset A\rtimes_X \BZ$ and $B\subset B\rtimes_Y \BZ$ are strongly Morita
equivalent if and only if there is an $A-B$-equivalence bimodule $M$ such that
$Y\cong \widetilde{M}\otimes_A X\otimes_A M$ or
$\widetilde{Y}\cong \widetilde{M}\otimes_A X\otimes_A M$ as $B-B$-equivalence bimodules,
where $\widetilde{M}$ and $\widetilde{Y}$ are the dual $B-A$-equivalence bimodule and the dual
$B-B$-equivalence bimodule of $M$ and $Y$, respectively. Applying the above result, we shall compute
the Picard group of the inclusion of $C^*$-algebras $A\subset A\rtimes_X \BZ$ under the assumption
that $A' \cap M(A\rtimes_X \BZ)=\BC 1$.
\section{Preliminaries}\label{sec:pre} 
Let $\BK$ be the $C^*$-algebra of all compact operators on a countably infinite dimensional Hilbert
space and $\{e_{ij}\}_{i, j\in \BN}$ its system of matrix units.
\par
For each $C^*$-algebra $A$, we denote
by $M(A)$ the multiplier $C^*$-algebra of $A$. Let $\pi$ be an isomorphism of $A$ onto a $C^*$-algebra
$B$. Then there is the unique strictly continuous isomorphism of $M(A)$ onto $M(B)$
extending $\pi$ by Jensen and Thomsen \cite [Corollary 1.1.15]{JT:KK}. We denote it by $\underline{\pi}$.
\par
Let $A$ and $B$ be $C^*$-algebras and $X$ an $A-B$-bimodule. We denote its left $A$-action and
right $B$-action on $X$ by $a\cdot x$ and $x\cdot b$ for any $a\in A$, $b\in B$, $x\in X$, respectively.
We denote by $\widetilde{X}$ the dual $B-A$-bimodule of $X$ and we denote by $\widetilde{x}$ the
element in $\widetilde{X}$ induced by $x\in X$.
Let $A\subset C$ and $B\subset D$ be inclusions of $C^*$-algebras.

\begin{Def}\label{def:int1} We say that $A\subset C$ and
$B\subset D$ are
\sl
isomorphic as inclusions of $C^*$-algebras
\rm
if there is an isomorphism $\pi$ of $C$ onto $D$ such that $\pi|_A$ is an isomorphism
of $A$ onto $B$.
\end{Def}

Also, we give the following definition:

\begin{Def}\label{def:int2} Let $\alpha$ nad $\beta$ be actions of a discrete group $G$ on $A$ and $B$,
respectively. We say that $\alpha$ and $\beta$ are
\sl
strongly Morita equivalent with respect to $(X, \lambda)$
\rm
if there are an $A-B$-equivalence bimodule $X$ and a linear automorphism $\lambda$ of $X$ satisfying
the following:
\newline
(1) $\alpha_t ({}_A \la x, y \ra )={}_A \la \lambda_t (x)\, , \, \lambda_t (y) \ra$,
\newline
(2) $\beta_t (\la x, y \ra_B )=\la \lambda_t (x) \, , \, \lambda_t (y) \ra_B$
\newline
for any $x, y\in X$, $t\in G$.
 \end{Def}
 
 Then we have the following:
 $$
 \lambda_t (a\cdot x)=\alpha_t (a)\cdot \lambda_t (x) , \quad
 \lambda_t (x\cdot b)=\lambda_t (x)\cdot \beta_t (b)
 $$
 for any $a\in A$, $b\in B$, $x\in X$ and $t\in G$.
 \par
Let $A$ and $B$ be $C^*$-algebras and $\pi$ an isomorphism of
$B$ onto $A$. We construct an $A-B$-equivalence bimodule $X_{\pi}$ as follows: Let $X_{\pi}=A$ as
$\BC$-vector spaces. For any $a\in A$, $b\in B$, $x, y\in X_{\pi}$,
\begin{align*}
& a\cdot x=ax \, , \quad x\cdot b=x\pi(b), \\
& {}_A \la x, y \ra=xy^* \, , \quad \la x, y \ra_B=\pi^{-1}(x^* y) .
\end{align*}
By easy computations, we can see that $X_{\pi}$ is an $A-B$-equivalence bimodule.
We call $X_{\pi}$ an $A-B$-equivalence bimodule induced by $\pi$. Let $\alpha$ be an automorphism
of $A$. Then in the same way as above, we construct $X_{\alpha}$, as $A-B$-equivalence bimodule.
Let $u_{\alpha}$ be a unitary element in $M(A\rtimes_{\alpha}\BZ)$ implementing $\alpha$.
Hence $\alpha=\Ad(u_{\alpha})$. We regard $Au_{\alpha}$ as an $A-A$-equivalence bimodule as follows:
\begin{align*}
& a\cdot xu_{\alpha}=axu_{\alpha}\, , \quad xu_{\alpha}\cdot a =x\alpha(a), \\
& {}_A \la xu_{\alpha} \, , \, yu_{\alpha} \ra \, , \quad \la xu_{\alpha} \, , \, yu_{\alpha} \ra_A
=\alpha^{-1}(x^* y)
\end{align*}
for any $a, x, y\in A$.

\begin{lemma}\label{lem:pre1} With the above notation, $X_{\alpha}\cong Au_{\alpha}$ as
$A-A$-equivalence bimodules.
\end{lemma}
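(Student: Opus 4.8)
The plan is to exhibit the obvious candidate map and verify that it respects all four pieces of the equivalence-bimodule structure, the entire content being a short computation inside the multiplier algebra $M(A\rtimes_{\alpha}\BZ)$. Define $\Phi\colon X_{\alpha}\to Au_{\alpha}$ by $\Phi(x)=xu_{\alpha}$ for $x\in X_{\alpha}=A$. Since $u_{\alpha}$ is a unitary in $M(A\rtimes_{\alpha}\BZ)$, right multiplication by $u_{\alpha}$ is an injective $\BC$-linear map whose image is exactly $Au_{\alpha}$, so $\Phi$ is a $\BC$-linear bijection. It then remains to check that $\Phi$ intertwines the left and right $A$-actions and preserves both the left and right $A$-valued inner products.

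First I would record the two identities coming from $\alpha=\Ad(u_{\alpha})$: for every $a\in A$ one has $u_{\alpha}au_{\alpha}^*=\alpha(a)$ and hence $u_{\alpha}^* au_{\alpha}=\alpha^{-1}(a)$, both holding in $M(A\rtimes_{\alpha}\BZ)$ and taking values in $A$. Granting these, the action checks are immediate from the definitions on the two bimodules: $\Phi(a\cdot x)=axu_{\alpha}=a\cdot(xu_{\alpha})=a\cdot\Phi(x)$, and $\Phi(x\cdot a)=x\alpha(a)u_{\alpha}=(xu_{\alpha})\cdot a=\Phi(x)\cdot a$. For the inner products, using $u_{\alpha}u_{\alpha}^*=1$ gives ${}_A\la\Phi(x),\Phi(y)\ra=(xu_{\alpha})(yu_{\alpha})^*=xu_{\alpha}u_{\alpha}^* y^*=xy^*={}_A\la x,y\ra$, while using $u_{\alpha}^* au_{\alpha}=\alpha^{-1}(a)$ gives $\la\Phi(x),\Phi(y)\ra_A=(xu_{\alpha})^*(yu_{\alpha})=u_{\alpha}^* x^* yu_{\alpha}=\alpha^{-1}(x^* y)=\la x,y\ra_A$. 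These four identities together say precisely that $\Phi$ is an isomorphism of $A-A$-equivalence bimodules.

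The hard part will essentially be nonexistent: the computation is entirely routine once the map $\Phi$ is written down. The only point that genuinely requires care is that $A$ need not be unital, so every manipulation of $u_{\alpha}$ must be performed inside $M(A\rtimes_{\alpha}\BZ)$ rather than in $A$ itself, and one must confirm at each step that the outputs land where they should, namely that the module operations keep elements inside $Au_{\alpha}\subset A\rtimes_{\alpha}\BZ$ and that both inner products return elements of $A$ (which they do, since $xy^*$ and $\alpha^{-1}(x^* y)$ both lie in $A$).
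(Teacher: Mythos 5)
Your proof is correct and is exactly the computation the paper leaves implicit: its proof of this lemma reads only ``immediate by easy computations,'' and the map $\Phi(x)=xu_{\alpha}$ with the four verifications (two actions, two inner products, using $u_{\alpha}au_{\alpha}^{*}=\alpha(a)$ in $M(A\rtimes_{\alpha}\BZ)$) is the intended argument. Your added care about non-unitality of $A$, working in the multiplier algebra throughout, is a sensible touch but does not change the route.
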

\begin{proof}
This is immediate by easy computations.
\end{proof}

Let $A$ be a $C^*$-algebra and $X$ an $A-A$-equivalence bimodule. Let $A\rtimes_X \BZ$ be the
crossed product of $A$ by $X$ defined in \cite {AEE:crossed}.
We regard the $C^*$-algebra $\BK$ as the trivial $\BK-\BK$-equivalence bimodule. Then we obtain
an $A\otimes\BK-A\otimes\BK$-equivalence bimodule $X\otimes\BK$ and we can also consider the
the crossed product
$$
(A\otimes\BK)\rtimes_{X\otimes\BK}\BZ
$$
of $A\otimes\BK$ by $X\otimes\BK$. Hence we have the following inclusions of $C^*$-algebras:
$$
A\subset A\rtimes_X \BZ \, , \quad A\otimes\BK\subset (A\otimes\BK)\rtimes_{X\otimes\BK}\BZ .
$$
Since there is an isomorphism $\pi$ of $(A\otimes\BK)\rtimes_{X\otimes\BK}\BZ$ onto
$(A\rtimes_X \BZ)\otimes\BK$ such that $\pi |_{A\rtimes\BK}=\id$ on $A\otimes\BK$, we identify
$A\otimes\BK \subset (A\otimes\BK)\rtimes_{X\otimes\BK}\BZ$ with
$A\otimes\BK\subset (A\rtimes_X \BZ)\otimes\BK$. Thus $A\subset A\rtimes_X \BZ$ and 
$A\otimes\BK\subset (A\otimes\BK)\rtimes_{X\otimes\BK}\BZ $ are strongly Morita equivalent.
Let $H_A$ be the $A\otimes\BK-A$-equivalence bimodule defined as follows:
Let $H_A =(A\otimes\BK)(1_{M(A)}\otimes e_{11})$ as $\BC$-vector spaces. For any $a\in A$, $k\in \BK$,
$x, y\in A\otimes\BK$,
\begin{align*}
& (a\otimes k)\cdot x(1\otimes e_{11})=(a\otimes k)x(1\otimes e_{11}) \, , \\
& x(1\otimes e_{11})\cdot a =x(a\otimes e_{11}) , \\
& {}_{A\otimes\BK} \la  x(1\otimes e_{11}) \, , \, y(1\otimes e_{11}) \ra =x(1\otimes e_{11})y^* \, , \\
& \la x(1\otimes e_{11}) \, , \, y(1\otimes e_{11}) \ra_A =(1\otimes e_{11})x^* y(1\otimes e_{11}) ,
\end{align*}
where we identify $A$ with $A\otimes e_{11}$. Let $B$ be a $C^*$-algebra. Let $H_B$ be as
above.

\begin{lemma}\label{lem:pre2} With the above notation, let $M$ be an $A-B$-
equivalence bimodule. Then
$$
(1\otimes e_{11})\cdot (M\otimes\BK)\cdot (1\otimes e_{11})\cong M
$$
as $A-B$-equivalence bimodules.
\end{lemma}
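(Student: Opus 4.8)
The plan is to exhibit the explicit candidate map $\Phi\colon M\to(1\otimes e_{11})\cdot(M\otimes\BK)\cdot(1\otimes e_{11})$ given by $\Phi(m)=m\otimes e_{11}$, and to verify that it is an isomorphism of $A-B$-equivalence bimodules. Here $M\otimes\BK$ carries the external tensor product structure coming from the trivial $\BK-\BK$-equivalence bimodule $\BK$ (so that ${}_{A\otimes\BK}\la m\otimes k,m'\otimes k'\ra={}_A\la m,m'\ra\otimes kk'^*$ and $\la m\otimes k,m'\otimes k'\ra_{B\otimes\BK}=\la m,m'\ra_B\otimes k^*k'$), and the corner is regarded as an $A-B$-equivalence bimodule by means of the identifications $A\cong A\otimes e_{11}$ and $B\cong B\otimes e_{11}$ used in the definition of $H_A$ above.

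First I would pin down the range of $\Phi$. The map $P\colon M\otimes\BK\to M\otimes\BK$, $P(\xi)=(1\otimes e_{11})\cdot\xi\cdot(1\otimes e_{11})$, is a bounded idempotent (using $e_{11}^2=e_{11}=e_{11}^*$, whence $(1\otimes e_{11})(1\otimes e_{11})=1\otimes e_{11}$), so its image $(1\otimes e_{11})\cdot(M\otimes\BK)\cdot(1\otimes e_{11})$ is a closed subspace. On an elementary tensor one computes $P(m\otimes k)=m\otimes e_{11}ke_{11}$, and since $e_{11}ke_{11}\in\BC e_{11}$ this lies in $\{m\otimes e_{11}:m\in M\}$; passing to closed linear spans shows that the corner equals $\{m\otimes e_{11}:m\in M\}$, which is precisely the range of $\Phi$. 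Moreover $\Phi$ is isometric, because $\|m\otimes e_{11}\|^2=\|\la m\otimes e_{11},m\otimes e_{11}\ra_{B\otimes\BK}\|=\|\la m,m\ra_B\otimes e_{11}\|=\|m\|^2$; hence $\Phi$ is a linear bijection of $M$ onto the corner.

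It then remains to check that $\Phi$ intertwines the two module actions and the two inner products. Using the external tensor product formulas together with the identifications $a\leftrightarrow a\otimes e_{11}$ and $b\leftrightarrow b\otimes e_{11}$, one gets $\Phi(a\cdot m)=(a\cdot m)\otimes e_{11}=(a\otimes e_{11})\cdot(m\otimes e_{11})=a\cdot\Phi(m)$ and likewise $\Phi(m\cdot b)=\Phi(m)\cdot b$, while ${}_{A\otimes\BK}\la\Phi(m),\Phi(m')\ra={}_A\la m,m'\ra\otimes e_{11}e_{11}^*$ and $\la\Phi(m),\Phi(m')\ra_{B\otimes\BK}=\la m,m'\ra_B\otimes e_{11}^*e_{11}$, which under the identifications are exactly ${}_A\la m,m'\ra$ and $\la m,m'\ra_B$. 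These compatibilities are immediate once the correct structures have been fixed, so the only genuine point requiring care—the main obstacle—is the bookkeeping in the second step: making precise how the corner inherits its $A-B$-equivalence bimodule structure from $M\otimes\BK$ through the corner identifications, and verifying that the compression collapses the $\BK$-factor onto $\BC e_{11}$ so that the range is exactly $M\otimes e_{11}$.
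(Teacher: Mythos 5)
Your proposal is correct and takes essentially the same route as the paper: the paper's proof also rests on the density of the elementary tensors $x\otimes e_{ij}$ in $M\otimes\BK$ to identify the compressed corner with $M\otimes e_{11}\cong M$. You simply make explicit the isomorphism $m\mapsto m\otimes e_{11}$, the computation $(1\otimes e_{11})\cdot(m\otimes k)\cdot(1\otimes e_{11})=m\otimes e_{11}ke_{11}\in M\otimes \BC e_{11}$, and the verification of the bimodule and inner-product compatibilities, all of which the paper leaves as "easy computations."
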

\begin{proof} Since the linear span of the set
$$
\{x\otimes e_{ij} \, | \, x\in M \, , \, i, j\in \BN \}
$$
is dense in $M\otimes\BK$, $(1\otimes e_{11})\cdot (M\otimes\BK)\cdot (1\otimes e_{11})
\cong M\otimes e_{11}$ as $A-B$-equivalence bimodules. Hence
$$
(1\otimes e_{11})\cdot (M\otimes\BK)\cdot (1\otimes e_{11})\cong M
$$
as $A-B$-equivalence bimodules.
\end{proof}

\begin{lemma}\label{lem:pre3} With the above notation, let $M$ be an $A-B$-equivalence
bimodule. Then
$$
\widetilde{H_A}\otimes_{A\otimes\BK}(M\otimes\BK)\otimes_{B\otimes\BK}H_B \cong M
$$
as $A-B$-equivalence bimodules.
\end{lemma}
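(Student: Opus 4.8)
The plan is to reduce the claim to Lemma \ref{lem:pre2} by realizing the two outer tensor factors as compressions. Concretely, I would show separately that tensoring by $\widetilde{H_A}$ on the left implements the left compression by $1\otimes e_{11}$, and that tensoring by $H_B$ on the right implements the right compression by $1\otimes e_{11}$; composing these two compressions with the isomorphism of Lemma \ref{lem:pre2} then gives the result. Throughout I identify $A$ with $A\otimes e_{11}=(1\otimes e_{11})(A\otimes\BK)(1\otimes e_{11})$, and similarly for $B$.

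First I would construct an isomorphism of $A-(B\otimes\BK)$-equivalence bimodules
$$
\Phi:\widetilde{H_A}\otimes_{A\otimes\BK}(M\otimes\BK)\longrightarrow (1\otimes e_{11})\cdot (M\otimes\BK),
$$
where the right-hand side carries the left $A$-action coming from the above identification. On elementary tensors I would set $\Phi(\widetilde{\xi}\otimes m)=\xi^* \cdot m$ for $\xi=x(1\otimes e_{11})\in H_A$ and $m\in M\otimes\BK$, noting that $\xi^*\cdot m=(1\otimes e_{11})x^* \cdot m$ indeed lies in $(1\otimes e_{11})\cdot (M\otimes\BK)$. Using the defining formulas for $H_A$ and for the dual bimodule, the balancing over $A\otimes\BK$ is immediate, and one checks that both inner products are preserved: for the right one this reduces to $\la \xi_1^* \cdot m_1, \xi_2^* \cdot m_2 \ra_{B\otimes\BK}=\la m_1, \xi_1 \xi_2^* \cdot m_2 \ra_{B\otimes\BK}$, while for the left one both sides equal $\xi_1^* \, {}_{A\otimes\BK}\la m_1, m_2 \ra \, \xi_2 \in A\otimes e_{11}$. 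Surjectivity follows from nondegeneracy of the $A\otimes\BK$-action, since $(1\otimes e_{11})(A\otimes\BK)\cdot (M\otimes\BK)$ is dense in $(1\otimes e_{11})\cdot (M\otimes\BK)$.

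By the entirely analogous computation for the right factor, I would construct an isomorphism of $A-B$-equivalence bimodules
$$
\Psi:(1\otimes e_{11})\cdot (M\otimes\BK)\otimes_{B\otimes\BK}H_B\longrightarrow (1\otimes e_{11})\cdot (M\otimes\BK)\cdot (1\otimes e_{11}),
$$
given on elementary tensors by $\Psi(n\otimes\eta)=n\cdot\eta$, the right action of $\eta=y(1\otimes e_{11})\in B\otimes\BK$ on $n$. Combining $\Phi$ and $\Psi$ with Lemma \ref{lem:pre2} then yields
$$
\widetilde{H_A}\otimes_{A\otimes\BK}(M\otimes\BK)\otimes_{B\otimes\BK}H_B\cong (1\otimes e_{11})\cdot (M\otimes\BK)\cdot (1\otimes e_{11})\cong M
$$
as $A-B$-equivalence bimodules, which is the assertion.

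The main obstacle I anticipate is the bookkeeping in the inner-product computations rather than any conceptual difficulty: one must keep careful track of the conventions for the dual bimodule $\widetilde{H_A}$, which interchanges the two inner products and conjugates the module actions, and of the identification of the corner $(1\otimes e_{11})(A\otimes\BK)(1\otimes e_{11})$ with $A\otimes e_{11}$ and hence with $A$. Once these conventions are fixed, checking that $\Phi$ and $\Psi$ are well-defined, balanced, and isometric for both inner products is routine, and then the standard criterion (an inner-product preserving linear surjection between equivalence bimodules is an isomorphism) finishes each step.
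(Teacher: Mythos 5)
Your proposal is correct and takes essentially the same approach as the paper: your composite map sends $[a(1\otimes e_{11})]^{\widetilde{}}\otimes x\otimes b(1\otimes e_{11})$ to $(1\otimes e_{11})\cdot (a^*\cdot x\cdot b)\cdot (1\otimes e_{11})$, which is exactly the map $\pi$ the paper defines in a single step before invoking Lemma \ref{lem:pre2}. The only difference is organizational: you factor the verification through two intermediate compressions, spelling out the inner-product checks that the paper dismisses as ``easy computations.''
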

\begin{proof}
Let $\pi$ be the map from
$\widetilde{H_A}\otimes_{A\otimes\BK}(M\otimes\BK)\otimes_{B\otimes\BK}H_B$ to
$(1\otimes e_{11})\cdot (M\otimes\BK)\cdot (1\otimes e_{11})$ defined by
$$
\pi([a(1\otimes e_{11})]^{\widetilde{}}\otimes x\otimes b(1\otimes e_{11}))
=(1\otimes e_{11})\cdot (a^* \cdot x\cdot b)\cdot (1\otimes e_{11})
$$
for any $a\in A\otimes \BK$, $b\in B\otimes\BK$, $x\in M\otimes\BK$. Then by
easy computations, $\pi$ is an $A-B$-equivalence bimodule isomorphism of
$\widetilde{H_A}\otimes_{A\otimes\BK}(M\otimes\BK)\otimes_{B\otimes \BK}H_B$
onto $(1\otimes e_{11})\cdot (M\otimes\BK)\cdot (1\otimes e_{11})$. Thus by
Lemma \ref{lem:pre2}
$$
\widetilde{H_A}\otimes_{A\otimes\BK}(M\otimes\BK)\otimes_{B\otimes\BK}H_B \cong M
$$
as $A-B$-equivalence bimodules. 
\end{proof}
We prepare the following lemma which is applied in the 
next section.

\begin{lemma}\label{lem:pre4} Let $A$ and $B$ be $C^*$algebras and $X$ and $Y$ an
$A-A$-equivalence bimodule and $B-B$-equivalence bimodule, respectively. Let
$A\subset A\rtimes_X \BZ$ and $B\subset B\rtimes_Y \BZ$ be inclusions of $C^*$algebras
induced by $X$ and $Y$, respectively. We suppose that there is an $A-B$-equivalence bimodule
$M$ such that $Y\cong \widetilde{M}\otimes_A X\otimes_A M$ or
$\widetilde{Y}\cong \widetilde{M}\otimes_A X\otimes_A M$ as $B-B$-equivalence bimodules.
Then there is an $A\rtimes_X \BZ -B\rtimes_Y \BZ$-equivalence bimodule $N$ satisfying the following:
\newline
$(1)$ $M$ is included in $N$ as a closed subspace,
\newline
$(2)$ $A\subset A\rtimes_X \BZ$ and $B\subset B\rtimes_Y \BZ$ are strongly Morita equivalent
with respect to $N$ and its closed subspace $M$. 
\end{lemma}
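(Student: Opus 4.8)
The plan is to build $N$ explicitly as a $\BZ$-graded $A\rtimes_X \BZ - B\rtimes_Y \BZ$-equivalence bimodule whose degree-zero part is exactly $M$. Recall that $A\rtimes_X \BZ$ and $B\rtimes_Y \BZ$ are built from the $\BZ$-graded (Fell bundle) structures whose nonnegative fibres are the tensor powers $X^{\otimes n}$ and $Y^{\otimes n}$ (tensored over $A$, resp.\ $B$), whose negative fibres are $\widetilde{X}^{\otimes n}$ and $\widetilde{Y}^{\otimes n}$, and whose degree-zero fibres are $A$ and $B$. I would first treat the case $Y\cong \widetilde{M}\otimes_A X\otimes_A M$. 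For $n\geq 0$ put $N_n =X^{\otimes n}\otimes_A M$ and $N_{-n}=\widetilde{X}^{\otimes n}\otimes_A M$; in particular $N_0=M$. Using the standard isomorphisms $M\otimes_B \widetilde{M}\cong A$ and $\widetilde{M}\otimes_A M\cong B$ together with the hypothesis, one collapses the internal factors $M\otimes_B \widetilde{M}\cong A$ to obtain a canonical second presentation $N_n\cong M\otimes_B Y^{\otimes n}$ for every $n\in\BZ$ (with $Y^{\otimes(-n)}:=\widetilde{Y}^{\otimes n}$). These two presentations supply, respectively, the left action of the degree-$m$ fibre $X^{\otimes m}$ and the right action of $Y^{\otimes m}$, and the two families of inner products valued in the fibres of the two graded algebras.

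Next I would check that $\{N_n\}_{n\in\BZ}$ is an equivalence of the two graded structures: the left $A\rtimes_X \BZ$-action and right $B\rtimes_Y \BZ$-action are grading-preserving and commute, the $A\rtimes_X \BZ$-valued inner product and the $B\rtimes_Y \BZ$-valued inner product satisfy the imprimitivity identity ${}_{A\rtimes_X \BZ}\la \xi,\eta \ra\cdot\zeta=\xi\cdot\la \eta,\zeta \ra_{B\rtimes_Y \BZ}$ on each triple of fibres, and positivity and fullness hold fibrewise. Completing the algebraic direct sum $\bigoplus_{n\in\BZ}N_n$ in the resulting norm then yields the $A\rtimes_X \BZ - B\rtimes_Y \BZ$-equivalence bimodule $N$. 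By construction $N_0=M$ is a closed subspace, giving condition $(1)$, and the restrictions to $M=N_0$ of the left $A\rtimes_X \BZ$-action, the right $B\rtimes_Y \BZ$-action and both inner products reduce to the original $A-B$-equivalence bimodule structure of $M$, which is precisely condition $(2)$.

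For the case $\widetilde{Y}\cong \widetilde{M}\otimes_A X\otimes_A M$ I would reduce to the case already treated. Reversing the grading $n\mapsto -n$ identifies the graded structure of $\widetilde{Y}$ with that of $Y$ and hence gives an isomorphism $B\rtimes_{\widetilde{Y}}\BZ\cong B\rtimes_Y \BZ$ that restricts to the identity on $B$. Applying the first case with $\widetilde{Y}$ in place of $Y$ produces an $A\rtimes_X \BZ - B\rtimes_{\widetilde{Y}}\BZ$-equivalence bimodule containing $M$ as its degree-zero subspace, and composing the right action with the above isomorphism yields the required $N$; since the isomorphism fixes $B$, the subspace $M$ and its $A-B$-equivalence bimodule structure are unchanged, so $(1)$ and $(2)$ persist.

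The main obstacle will be the coherent construction and verification of the inner products. One must confirm that the two presentations $X^{\otimes n}\otimes_A M$ and $M\otimes_B Y^{\otimes n}$ of $N_n$ are interchanged compatibly with the bundle multiplications across all fibres simultaneously, so that the single imprimitivity identity linking the $A\rtimes_X \BZ$-valued and $B\rtimes_Y \BZ$-valued inner products holds globally, and that these valued inner products genuinely restrict on $N_0=M$ to ${}_A\la\,\cdot\,,\,\cdot\,\ra$ and $\la\,\cdot\,,\,\cdot\,\ra_B$; the remaining positivity, fullness and completion arguments are then routine.
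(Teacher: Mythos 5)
Your construction is correct in substance and in fact produces the same bimodule $N$ as the paper, but by a different route. The paper does not verify the graded imprimitivity axioms by hand: it forms the linking algebra
$L_M=\begin{bmatrix} A & M \\ \widetilde{M} & B \end{bmatrix}$,
takes the $L_M-L_M$-equivalence bimodule
$W=\begin{bmatrix} X & X\otimes_A M \\ Y\otimes_B \widetilde{M} & Y \end{bmatrix}$
from the proof of Theorem 4.2 of \cite{AEE:crossed}, and sets $N=p(L_M\rtimes_W \BZ)q$, where $p$ and $q$ are the diagonal corner projections; since $M=pL_M q$, condition $(1)$ is immediate and $(2)$ follows from the argument of \cite{AEE:crossed}. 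Your graded components $N_n=X^{\otimes n}\otimes_A M\cong M\otimes_B Y^{\otimes n}$ are exactly the homogeneous pieces of this corner, so the two constructions yield the same object: yours makes the grading and the two presentations explicit, while the paper's makes the analytic part free of charge. For the dual case the two arguments essentially coincide: your grading-reversal isomorphism $B\rtimes_{\widetilde{Y}}\BZ\cong B\rtimes_Y \BZ$ restricting to the identity on $B$ is the paper's $\pi$, and composing the right action with it is the same as the paper's tensoring of $N_0$ with the bimodule $X_\pi$, which leaves $M\cong M\otimes_B B$ and its $A-B$-structure untouched.

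The one place where you underestimate the work is the closing claim that ``positivity, fullness and completion arguments are then routine.'' Positivity of the $A\rtimes_X \BZ$-valued inner product is not a fibrewise statement: for $\xi$ in the algebraic direct sum $\bigoplus_{n\in\BZ}N_n$, the element ${}_{A\rtimes_X \BZ}\la \xi , \xi \ra$ is a finite sum of terms spread over many fibres, and its positivity in the crossed product --- together with the fact that the two inner products induce the same norm, so that a single completion carries both structures --- must be established globally, not fibre by fibre. The standard way to do this is precisely to embed the entire graded structure into one honest $C^*$-algebra, namely the linking crossed product $L_M\rtimes_W \BZ$, where ${}_{A\rtimes_X\BZ}\la \xi , \xi \ra$ becomes $p\xi\xi^* p$ and $\la \xi , \xi \ra_{B\rtimes_Y\BZ}$ becomes $q\xi^*\xi q$, making positivity, the Cauchy--Schwarz estimates and completeness automatic. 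So to discharge your deferred step honestly you would either reproduce the paper's corner argument or invoke the general theory of equivalences of Fell bundles over $\BZ$; as written, your outline identifies the right object but postpones exactly the step that the paper's choice of route is designed to finesse.
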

\begin{proof} Modifying the proof of \cite [Theorem 4.2]{AEE:crossed},
we prove this lemma. We suppose that $Y\cong \widetilde{M}\otimes_A X\otimes_A M$ as
$B-B$-equivalence bimodules. Let $L_M$ be the linking $C^*$-algebra for $M$ defined by
$$
L_M =\begin{bmatrix} A & M \\
\widetilde{M} & B \end{bmatrix} .
$$
Also, let $W$ be the $L_M -L_M$- equivalence bimodule defined in the proof of
\cite [Theorem 4.2]{AEE:crossed}, which is defined by
$$
W=\begin{bmatrix} X & X\otimes_A M \\
Y\otimes_B \widetilde{M} & Y \end{bmatrix} .
$$
Let $L_M \rtimes_W \BZ$ be the crossed product of $L_M$ by $W$ and let
$$
p=\begin{bmatrix} 1_{M(A)} & 0 \\
0 & 0 \end{bmatrix} \, , \quad
q=\begin{bmatrix} 0 & 0 \\
0 & 1_{M(B)} \end{bmatrix} .
$$
Furthermore, let $N=p(L_M \rtimes_W \BZ)q$. Then since $M=pL_M q$, $M$ is a closed subspace of $N$. 
Hence by the proof of \cite [Theorem 4.2]{AEE:crossed}, $A\subset A\rtimes_X \BZ$ and
$B\subset B\rtimes_Y \BZ$ are strongly Morita equivalent with respect $N$ and its closed subspace $M$.
\par
Next, we suppose that $\widetilde{Y}\cong \widetilde{M}\otimes_A X\otimes_A M$. Let
$$
W_0 =\begin{bmatrix} X & X\otimes_A M \\
\widetilde{Y}\otimes_B \widetilde{M} & \widetilde{Y} \end{bmatrix} .
$$
Then $W_0$ is an $L_M -L_M$-equivalence bimodule. Let $N_0 =p(L_M \rtimes_{W_0}\BZ)q$.
By the above discussions, $A\subset A\rtimes_X \BZ$ and $B\subset B\rtimes_{\widetilde{Y}}\BZ$
are strongly Morita equivalent with respect to $N_o$ and its closed subspace $M$. On the other hand,
there is an isomorphism $\pi$ of $B\rtimes_Y \BZ$ onto $B\rtimes_{\widetilde{Y}}\BZ$ such that
$\pi|_{B}=\id$ on $B$. Let $X_{\pi}$ be the $B\rtimes_{\widetilde{Y}}\BZ-B\rtimes_Y \BZ$
-equivalence bimodule induced by $\pi$.
Then $B$ is a closed subspace of $X_{\pi}$ and we regard $B$ as the trivial $B-B$-equivalence bimodule
since $\pi|_{B} =\id$ on $B$. Thus $A\subset A\rtimes_X \BZ$ and $B\subset B\rtimes_Y \BZ$ are
strongly Morita equivalent with respect to $N_0 \otimes_{B\rtimes_{\widetilde{Y}}\BZ}X_{\pi}$ and
its closed subspace $M\otimes_B B(\cong M)$. Therefore, we obtain the conclusion.
\end{proof}

\begin{lemma}\label{lem:pre5} With the above notation, we suppose that $A$ is a $\sigma$-unital
$C^*$-algebra. Then there is an automorphism $\alpha$ of $A\otimes\BK$ such that
$A\otimes\BK\subset (A\otimes\BK)\rtimes_{\alpha}\BZ$ is isomorphic to $A\otimes\BK\subset
(A\otimes\BK)\rtimes_{X\otimes\BK}\BZ$ as inclusions of $C^*$-algebras.
\end{lemma}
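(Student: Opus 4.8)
The plan is to stabilize and then realize the equivalence bimodule by an honest automorphism. Write $B=A\otimes\BK$. Since $A$ is $\sigma$-unital and $\BK$ is separable, $B$ is $\sigma$-unital, and $B$ is stable because $\BK\otimes\BK\cong\BK$. Moreover $X\otimes\BK$ is a $B-B$-equivalence bimodule. The key point I would establish is that over a stable $\sigma$-unital $C^*$-algebra every self-equivalence bimodule is induced by an automorphism; equivalently, that the natural map $\Aut(B)\to\Pic(B)$, $\alpha\mapsto[X_\alpha]$, is surjective. Granting this, there is $\alpha\in\Aut(B)$ with $X\otimes\BK\cong X_\alpha$ as $B-B$-equivalence bimodules, and the conclusion will follow by functoriality of the crossed product.

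To produce $\alpha$, I would form the linking $C^*$-algebra
$$
L=\begin{bmatrix} B & X\otimes\BK \\ \widetilde{X\otimes\BK} & B \end{bmatrix}
$$
of the equivalence bimodule $X\otimes\BK$ and consider the complementary projections
$$
p=\begin{bmatrix} 1_{M(B)} & 0 \\ 0 & 0 \end{bmatrix}, \quad
q=\begin{bmatrix} 0 & 0 \\ 0 & 1_{M(B)} \end{bmatrix}
$$
in $M(L)$. Both $p$ and $q$ are full, and the corners satisfy $pLp\cong B\cong qLq$, so both corners are stable and $\sigma$-unital. By the stable isomorphism theorem of L. G. Brown together with the Brown--Green--Rieffel theorem, two full projections in $M(L)$ whose corners are isomorphic, stable and $\sigma$-unital are Murray--von Neumann equivalent; hence there is a partial isometry $v\in M(L)$ with $v^*v=p$ and $vv^*=q$. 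Transporting $B=pLp$ to $qLq$ by $\Ad(v)$ and identifying $qLq$ with $B$ yields an automorphism $\alpha$ of $B$, and tracking the bimodule structure of $pLq=X\otimes\BK$ through $v$ gives $X\otimes\BK\cong X_\alpha$ as $B-B$-equivalence bimodules.

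Once this bimodule isomorphism is available, Lemma \ref{lem:pre1} identifies $X_\alpha$ with $Bu_\alpha$, so $B\rtimes_{X_\alpha}\BZ$ is the ordinary crossed product $B\rtimes_\alpha\BZ$ with $B$ included in the usual way. By the functoriality of the crossed-product-by-a-bimodule construction of \cite{AEE:crossed}, an isomorphism of equivalence bimodules induces an isomorphism of the associated crossed products that restricts to the identity on the coefficient algebra. Applying this to $X\otimes\BK\cong X_\alpha$ gives an isomorphism of $(A\otimes\BK)\rtimes_{X\otimes\BK}\BZ$ onto $(A\otimes\BK)\rtimes_\alpha\BZ$ which is the identity on $A\otimes\BK$, and this is precisely an isomorphism of inclusions in the sense of Definition \ref{def:int1}.

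The hard part will be the middle step: showing that $p$ and $q$ are equivalent in $M(L)$ and extracting a genuine automorphism from $v$. This is exactly where the stabilization by $\BK$ is indispensable, since Brown's theorem requires the corners to be full, stable and $\sigma$-unital, and for a non-stabilized algebra an arbitrary equivalence bimodule need not come from an automorphism at all. By contrast, the verification that $\Ad(v)$ descends to an automorphism $\alpha$ of $B$ and that the resulting $X_\alpha$ reproduces $X\otimes\BK$, as well as the final functoriality bookkeeping, should be routine.
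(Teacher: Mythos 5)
Your proposal is correct and takes essentially the same route as the paper: the paper obtains the automorphism $\alpha$ with $X\otimes\BK\cong X_{\alpha}$ by citing \cite[Corollary 3.5]{BGR:linking} directly, and your linking-algebra argument with the full projections $p$, $q$ and the partial isometry $v$ is precisely the standard proof of that cited result. Your concluding steps, identifying $X_{\alpha}$ with $(A\otimes\BK)u_{\alpha}$ via Lemma \ref{lem:pre1} and invoking the functoriality of the crossed product construction of \cite{AEE:crossed} to get an isomorphism fixing $A\otimes\BK$ pointwise, coincide with the paper's.
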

\begin{proof} Since $A$ is $\sigma$-unital, by Brown, Green and Rieffel \cite [Corollary 3.5]
{BGR:linking}, there is an automorphism $\alpha$ of $A\otimes\BK$ such that $X\otimes\BK\cong
X_{\alpha}$ as $A\otimes\BK -A\otimes\BK$-equivalence bimodules, where $X_{\alpha}$ is
the $A\otimes\BK -A\otimes\BK$-equivalence bimodule induced by $\alpha$.
Let $u_{\alpha}$ be a unitary element in
$M((A\otimes\BK)\rtimes_{\alpha}\BZ)$ implementing $\alpha$. We regard $(A\otimes\BK)u_{\alpha}$
as an $A\otimes\BK -A\otimes\BK$-equivalence bimodule as above. Then by Lemma \ref{lem:pre1},
$X_{\alpha}\cong (A\otimes\BK)u_{\alpha}$ as $A\otimes\BK -A\otimes\BK$-equivalence bimodules.
Let $(A\otimes\BK)\rtimes_{(A\otimes\BK)u_{\alpha}}\BZ$ be the crossed product of $A\otimes\BK$ by
$(A\otimes\BK)u_{\alpha}$. Then by the definition of the crossed product of a $C^*$-algebra by
an equivalence bimodule, we can see that 
$$
(A\otimes\BK)\rtimes_{\alpha}\BZ\cong (A\otimes\BK)\rtimes_{(A\otimes\BK)u_{\alpha}}\BZ
$$
as $C^*$-algebras. Since $X_{\alpha}\cong X\otimes\BK$ as $A\otimes\BK-A\otimes\BK$-
equivalence bimodules, we obtain that
$$
(A\otimes\BK)\rtimes_{(A\otimes\BK)u_{\alpha}}\BZ\cong (A\otimes\BK)\rtimes_{X_{\alpha}}\BZ
\cong (A\otimes\BK)\rtimes_{X\otimes\BK}\BZ
$$
as $C^*$-algebras. Since the above isomorphisms leave any element in $A\otimes\BK$ invariant,
we can see that $A\otimes\BK\subset (A\otimes\BK)\rtimes_{\alpha} \BZ$ is isomorphic to
$A\otimes\BK\subset(A\otimes\BK)\rtimes_{X\otimes\BK}\BZ$ as inclusions of $C^*$-algebras.
\end{proof}

\section{Strong Morita equivalence}\label{sec:SM} Let $A$ and $B$ be $\sigma$-unital $C^*$-algebras
and $X$ and $Y$ an $A-A$-equivalence bimodule and a $B-B$-equivalence bimodule, respectively.
Let $A\subset A\rtimes_X \BZ$ and $B\subset B\rtimes_Y \BZ$ be the inclusions of $C^*$-algebras
induced by $X$ and $Y$, respectively. We suppose that $A\subset A\rtimes_X \BZ$ and
$B\subset B\rtimes_Y \BZ$ are strongly Morita equivalent with respect to an
$A\rtimes_X \BZ -B\rtimes_Y \BZ$-equivalence bimodule $N$ and its closed subspace $M$.
We suppose that $A' \cap M(A\rtimes_X \BZ)=\BC 1$. Then since the inclusion
$A\otimes\BK\subset (A\otimes\BK)\rtimes_{X\otimes\BK}\BZ$ is isomorphic to the inclusion
$A\otimes\BK\subset (A\rtimes_X \BZ)\otimes\BK$ as inclusions of $C^*$-algebras,
by \cite [Lemma 3.1]{Kodaka:countable},
$$
(A\otimes\BK)' \cap((A\otimes\BK)\rtimes_{X\otimes\BK}\BZ)=\BC 1 .
$$
Also, by the above assumptions, the inclusion
$A\otimes\BK\subset (A\otimes\BK)\rtimes_{X\otimes\BK}\BZ$ is strongly Morita equivalent to
the inclusion $B\otimes\BK\subset (B\otimes\BK)\rtimes_{Y\otimes\BK}\BZ$ with respect to the
$(A\otimes\BK)\rtimes_{X\otimes\BK}\BZ-(B\otimes\BK)\rtimes_{Y\otimes\BK}\BZ$-equivalence
bimodule $N\otimes\BK$ and its closed subspace $M\otimes\BK$. By Lemma \ref {lem:pre5},
there are an automorphism $\alpha$ of $A\otimes\BK$ and an automorphism $\beta$ of $B\otimes\BK$
such that $A\otimes\BK\subset (A\otimes\BK)\rtimes_{X\otimes\BK}\BZ$ and
$B\otimes\BK\subset (B\otimes\BK)\rtimes_{Y\otimes\BK}\BZ$ are isomorphic to
$A\otimes\BK\subset (A\otimes\BK)\rtimes_{\alpha}\BZ$ and
$B\otimes\BK\subset (B\otimes\BK)\rtimes_{\beta}\BZ$ as inclusions of $C^*$-algebras, respectively.
Hence we can assume that $A\otimes\BK\subset(A\otimes\BK)\rtimes_{\alpha}\BZ$ and
$B\otimes\BK\subset (B\otimes\BK)\rtimes_{\beta}\BZ$ are strongly Morita equivalent with respect to
an $(A\otimes\BK)\rtimes_{\alpha}\BZ -(B\otimes\BK)\rtimes_{\beta}\BZ$-equivalence bimodule
$N\otimes\BK$ and its closed subspace $M\otimes\BK$. Since $A$ and $B$ are
$\sigma$-unital, in the same way as in the proof of \cite [Proposition 3.5]{Kodaka:Picard2} or
\cite [Proposition 3.1]{BGR:linking}, there is an isomorphism $\theta$ of $(B\otimes\BK)\rtimes_{\beta}\BZ$
onto $(A\otimes\BK)\rtimes_{\alpha}\BZ$ satisfying the following:
\newline
(1) $\theta|_{B\otimes\BK}$ is an isomorphism of $B\otimes\BK$ onto $A\otimes\BK$,
\newline
(2) There is an $(A\otimes\BK)\rtimes_{\alpha}\BZ-(B\otimes\BK)\rtimes_{\beta}\BZ$-
equivalence bimodule isomorphism $\Phi$ of $N\otimes\BK$ onto $Y_{\theta}$ such that
$\Phi|_{M\otimes\BK}$ is an $A\otimes\BK-B\otimes\BK$-equivalence bimodule isomorphism of
$M\otimes\BK$ onto $X_{\theta}$,
where $Y_{\theta}$ is the $(A\otimes\BK)\rtimes_{\alpha}\BZ-
(B\otimes\BK)\rtimes_{\beta}\BZ$-equivalence bimodule induced by $\theta$ and $X_{\theta}$ is the
$A\otimes\BK-B\otimes\BK$-equivalence bimodule induced by $\theta|_{B\otimes\BK}$.
\par
Let
$$
\gamma=\theta|_{B\otimes\BK}\circ\beta\circ\theta|_{B\otimes\BK}^{-1}
$$
and let $\lambda$ be the linear automorphism of $X_{\theta}$ defined by $\lambda(x)=\gamma(x)$
for any $x\in X_{\theta}(=A\otimes\BK)$.

\begin{lemma}\label{lem:SM1} With the above notation, $\gamma$ and $\beta$ are strongly Morita
equivalent with respect to $(X_{\theta}, \lambda)$.
\end{lemma}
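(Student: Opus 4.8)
The plan is to check the two defining conditions of Definition \ref{def:int2} directly, taking $G=\BZ$ with the actions generated by the single automorphisms $\beta$, $\gamma$ and the single linear automorphism $\lambda$; that is, $\beta_n=\beta^n$, $\gamma_n=\gamma^n$ and $\lambda_n=\lambda^n=\gamma^n$ on $X_{\theta}$. Recall that, as the $A\otimes\BK-B\otimes\BK$-equivalence bimodule induced by $\theta|_{B\otimes\BK}$, the module $X_{\theta}$ equals $A\otimes\BK$ as a $\BC$-vector space, with
$$
{}_{A\otimes\BK}\la x, y\ra=xy^* , \qquad \la x, y\ra_{B\otimes\BK}=\theta|_{B\otimes\BK}^{-1}(x^* y)
$$
for $x, y\in X_{\theta}$, and $\lambda(x)=\gamma(x)$. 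So everything reduces to two short computations inside $A\otimes\BK$.

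For condition (1) I would simply use that $\gamma$ is a $*$-automorphism of $A\otimes\BK$. The right-hand side becomes
$$
{}_{A\otimes\BK}\la \lambda^n(x), \lambda^n(y)\ra=\gamma^n(x)\,\gamma^n(y)^* =\gamma^n(x)\,\gamma^n(y^*)=\gamma^n(xy^*)=\gamma^n({}_{A\otimes\BK}\la x, y\ra),
$$
which is exactly what is required; the only facts used are that $\gamma^n$ is multiplicative and $*$-preserving.

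For condition (2) the key is to unfold the definition $\gamma=\theta|_{B\otimes\BK}\circ\beta\circ\theta|_{B\otimes\BK}^{-1}$, which iterates to $\gamma^n=\theta|_{B\otimes\BK}\circ\beta^n\circ\theta|_{B\otimes\BK}^{-1}$, equivalently $\theta|_{B\otimes\BK}^{-1}\circ\gamma^n=\beta^n\circ\theta|_{B\otimes\BK}^{-1}$. Then
$$
\la \lambda^n(x), \lambda^n(y)\ra_{B\otimes\BK}=\theta|_{B\otimes\BK}^{-1}(\gamma^n(x)^*\gamma^n(y))=\theta|_{B\otimes\BK}^{-1}(\gamma^n(x^* y))=\beta^n(\theta|_{B\otimes\BK}^{-1}(x^* y))=\beta^n(\la x, y\ra_{B\otimes\BK}),
$$
where the middle step is the cancellation coming from the conjugation identity.

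Since both verifications are elementary, I expect no genuine obstacle here; the only point requiring care is the bookkeeping with $\theta|_{B\otimes\BK}^{-1}$ and the conjugation identity, together with the remark that it suffices to treat the generator of $\BZ$ and pass to powers, because $\gamma$, $\beta$ and $\lambda$ are all determined by their values at $t=1$.
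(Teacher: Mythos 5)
Your proposal is correct and takes essentially the same approach as the paper: the paper's proof is exactly your two computations for the generator $t=1$, namely ${}_{A\otimes\BK}\la \lambda(x),\lambda(y)\ra=\gamma(xy^*)$ and $\la \lambda(x),\lambda(y)\ra_{B\otimes\BK}=\theta|_{B\otimes\BK}^{-1}(\gamma(x^*y))=\beta(\theta|_{B\otimes\BK}^{-1}(x^*y))$, using the conjugation identity $\gamma=\theta|_{B\otimes\BK}\circ\beta\circ\theta|_{B\otimes\BK}^{-1}$. Your explicit bookkeeping with $n$-th powers of the $\BZ$-actions is a harmless elaboration of what the paper leaves implicit.
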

\begin{proof} For any $x, y\in X_{\theta}$,
\begin{align*}
{}_{A\otimes\BK}\la \lambda(x) \, , \, \lambda(y) \ra 
& =\gamma(xy^* )=\gamma({}_{A\otimes\BK} \la x \, , \, y \ra) , \\
\la \lambda(x) \, ,\, \lambda(y) \ra _{B\otimes\BK} & =\theta|_{B\otimes\BK}^{-1}(\gamma(x^* y))
=\beta(\theta|_{B\otimes\BK}^{-1}(x^* y))=\beta(\la x \, \, y \ra_{B\otimes\BK}) .
\end{align*}
Hence $\gamma$ and $\beta$ are strongly Morita equivalent with respect to $(X_{\theta}, \lambda)$.
\end{proof}

By the proof of \cite [Theorem 5.5]{Kodaka:countable}, there is an automorphism $\phi$ of
$\BZ$ satisfying that $\gamma^{\phi}$ and $\alpha$ are exterior equivalent, that is, there is a unitary
element $z\in M(A\otimes\BK)$ such that
$$
\gamma^{\phi}=\Ad(z)\circ \alpha\, , \quad \underline{\alpha}(z)=z ,
$$
where $\gamma^{\phi}$ is the automorphism of $A\otimes\BK$ induced by $\gamma$ and $\phi$,
that is, $\gamma^{\phi}$ is defined by $\gamma^{\phi}=\gamma^{\phi(1)}$.
We note that $\gamma^{\phi}=\gamma$ or $\gamma^{\phi}=\gamma^{-1}$.
We regard $A\otimes\BK$ as the trivial $A\otimes\BK-A\otimes\BK$-equivalence bimdule.
Let $\mu$ be the linear automorphism of $A\otimes\BK$ defined by
$$
\mu(x)=\alpha(x)z^*
$$
for any $x\in A\otimes\BK$.

\begin{lemma}\label{lem:SM2} With the aboe notation, $\alpha$ and $\gamma^{\phi}$ are strongly Morita
equivalent with respect to $(A\otimes\BK,  \mu)$.
\end{lemma}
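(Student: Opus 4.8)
The plan is to verify directly the two defining conditions of Definition \ref{def:int2} for the generator of $\BZ$, exactly in the style of Lemma \ref{lem:SM1}. Recall that on the trivial $A\otimes\BK-A\otimes\BK$-equivalence bimodule $A\otimes\BK$ the inner products are ${}_{A\otimes\BK}\la x, y\ra=xy^*$ and $\la x, y\ra_{A\otimes\BK}=x^* y$ for $x, y\in A\otimes\BK$, and that here $\alpha$ is to play the role of the automorphism on the \emph{left} coefficient algebra, while $\gamma^{\phi}$ plays the role of the automorphism on the \emph{right} coefficient algebra. Thus the two identities to establish are $\alpha({}_{A\otimes\BK}\la x, y\ra)={}_{A\otimes\BK}\la \mu(x), \mu(y)\ra$ and $\gamma^{\phi}(\la x, y\ra_{A\otimes\BK})=\la \mu(x), \mu(y)\ra_{A\otimes\BK}$.

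First I would check condition $(1)$. Since $\mu(x)=\alpha(x)z^*$ and $\alpha$ is a $*$-automorphism, we have $\mu(y)^*=z\alpha(y^*)$, so that
$$
{}_{A\otimes\BK}\la \mu(x), \mu(y)\ra=\mu(x)\mu(y)^*=\alpha(x)z^* z\alpha(y^*).
$$
The unitarity $z^* z=1$ collapses this to $\alpha(x)\alpha(y^*)=\alpha(xy^*)=\alpha({}_{A\otimes\BK}\la x, y\ra)$, which is precisely condition $(1)$; this is the only place where the unitarity of $z$ is used. Next I would check condition $(2)$ by the analogous computation
$$
\la \mu(x), \mu(y)\ra_{A\otimes\BK}=\mu(x)^* \mu(y)=z\alpha(x^*)\alpha(y)z^*=z\alpha(x^* y)z^*.
$$
Because $\gamma^{\phi}=\Ad(z)\circ\alpha$, the right-hand side equals $\gamma^{\phi}(x^* y)=\gamma^{\phi}(\la x, y\ra_{A\otimes\BK})$, giving condition $(2)$. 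Once both identities hold for the generator, they propagate to every $n\in\BZ$ by composing $\mu$, $\alpha$ and $\gamma^{\phi}$ with themselves, since $\mu$ is a linear automorphism.

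There is no genuine obstacle here: the lemma reduces to the two short $*$-algebra manipulations above, and the content is entirely formal. The only points requiring care are identifying which of $\alpha$ and $\gamma^{\phi}$ acts on which side of the trivial bimodule, and observing that the defining relation $\gamma^{\phi}=\Ad(z)\circ\alpha$ together with $z^* z=1$ is exactly what the two conditions demand. I would note, in particular, that the cocycle relation $\underline{\alpha}(z)=z$ is \emph{not} needed for this lemma itself; it enters elsewhere, in extending the exterior equivalence to the level of the crossed products.
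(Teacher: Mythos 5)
Your proof is correct and is essentially identical to the paper's: the paper verifies the same two inner-product identities ${}_{A\otimes\BK}\la \mu(x),\mu(y)\ra=\alpha(xy^*)$ and $\la \mu(x),\mu(y)\ra_{A\otimes\BK}=z\alpha(x^*y)z^*=\gamma^{\phi}(x^*y)$ by the same direct computation using the unitarity of $z$ and $\gamma^{\phi}=\Ad(z)\circ\alpha$. Your added remarks (propagation from the generator to all of $\BZ$, and that $\underline{\alpha}(z)=z$ is not needed here) are accurate but go slightly beyond what the paper bothers to write.
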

\begin{proof} For any $x, y\in A\otimes\BK$,
\begin{align*}
{}_{A\otimes\BK} \la \mu(x) \, , \, \mu(y) \ra & ={}_{A\otimes\BK} \la \alpha(x)z^* \, , \, \alpha(y)z^* \ra
=\alpha(xy^* )=\alpha({}_{A\otimes\BK} \la x \, , \, y \ra) , \\
\la \mu(x) \, , \, \mu(y) \ra_{A\otimes\BK} & =z\alpha(x^* y)z^* =\gamma^{\phi}(x^* y)
=\gamma^{\phi}(\la x \, ,\, y \ra_{A\otimes\BK}) .
\end{align*}
Therefore, we obtain the conclusion.
\end{proof}
Let $\nu$ be the linear automorphism of $X_{\theta}$ defined by
$$
\nu(x)=\gamma^{\phi}(z^* x)
$$
for any $x\in X_{\theta}(=A\otimes\BK)$.

\begin{lemma}\label{lem:SM3} With the above notation, $\alpha$ and $\beta^{\phi}$ are
strongly Morita equivalent with respect to $(X_{\theta}, \nu)$, where $\beta^{\phi}$ is the automorphism
of $B\otimes\BK$ induced by $\beta$ and $\phi$.
\end{lemma}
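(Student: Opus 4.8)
The plan is to verify directly the two conditions of Definition \ref{def:int2} for the pair $(X_{\theta}, \nu)$, namely that for all $x, y\in X_{\theta}$ one has
$$
\alpha({}_{A\otimes\BK}\la x, y\ra)={}_{A\otimes\BK}\la \nu(x), \nu(y)\ra
\quad\text{and}\quad
\beta^{\phi}(\la x, y\ra_{B\otimes\BK})=\la \nu(x), \nu(y)\ra_{B\otimes\BK}.
$$
Here I recall that $X_{\theta}=A\otimes\BK$ as a $\BC$-vector space, with ${}_{A\otimes\BK}\la x, y\ra=xy^*$ and $\la x, y\ra_{B\otimes\BK}=\theta|_{B\otimes\BK}^{-1}(x^* y)$, and that $\nu(x)=\gamma^{\phi}(z^* x)$. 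The only inputs I expect to use are the exterior-equivalence relations $\gamma^{\phi}=\Ad(z)\circ\alpha$ and $\underline{\alpha}(z)=z$ together with the unitarity $zz^* =z^* z=1$, and the conjugacy relation $\gamma^{\phi}=\theta|_{B\otimes\BK}\circ\beta^{\phi}\circ\theta|_{B\otimes\BK}^{-1}$, which follows from $\gamma=\theta|_{B\otimes\BK}\circ\beta\circ\theta|_{B\otimes\BK}^{-1}$ by raising to the power $\phi(1)$.

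For the left inner product, I would expand $\gamma^{\phi}(z^* x)\gamma^{\phi}(z^* y)^* =\gamma^{\phi}(z^* x y^* z)$ and then rewrite $\gamma^{\phi}=\Ad(z)\circ\alpha$ to obtain $z\alpha(z^* x y^* z)z^*$. Applying $\underline{\alpha}(z)=z$, so that $\alpha$ fixes $z$ and $z^*$ in the multiplier algebra, and cancelling the unitaries via $zz^* =1$, this collapses to $\alpha(xy^*)=\alpha({}_{A\otimes\BK}\la x, y\ra)$, which is condition $(1)$. For the right inner product, I would compute $\nu(x)^* \nu(y)=\gamma^{\phi}(x^* z)\gamma^{\phi}(z^* y)=\gamma^{\phi}(x^* y)$, again by unitarity of $z$, and then apply $\theta|_{B\otimes\BK}^{-1}$. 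Using $\gamma^{\phi}=\theta|_{B\otimes\BK}\circ\beta^{\phi}\circ\theta|_{B\otimes\BK}^{-1}$, the conjugating isomorphisms cancel and I am left with $\beta^{\phi}(\theta|_{B\otimes\BK}^{-1}(x^* y))=\beta^{\phi}(\la x, y\ra_{B\otimes\BK})$, which is condition $(2)$.

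I expect no genuine obstacle here; as with Lemmas \ref{lem:SM1} and \ref{lem:SM2}, this is a bookkeeping computation. The only point requiring care is the repeated use of $\underline{\alpha}(z)=z$, which is precisely what makes the auxiliary unitary $z$ disappear and thereby converts the $\gamma^{\phi}$-equivariance of $X_{\theta}$ (the $\phi$-analogue of Lemma \ref{lem:SM1}) into $\alpha$-equivariance.

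Conceptually, this lemma is the composition of the equivalence $\alpha\sim\gamma^{\phi}$ of Lemma \ref{lem:SM2} with the equivalence $\gamma^{\phi}\sim\beta^{\phi}$ implemented by $(X_{\theta}, x\mapsto\gamma^{\phi}(x))$: under the identification $(A\otimes\BK)\otimes_{A\otimes\BK}X_{\theta}\cong X_{\theta}$, the automorphism $\mu\otimes(x\mapsto\gamma^{\phi}(x))$ is carried to $\nu$, since $\mu(1)\gamma^{\phi}(x)=z^* \gamma^{\phi}(x)=\gamma^{\phi}(z^* x)=\nu(x)$. If preferred, one could present the argument in this composed form rather than by direct verification; I would, however, favour the direct computation above as the shortest route.
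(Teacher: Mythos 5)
Your proposal is correct and takes essentially the same route as the paper: the paper's proof is exactly your direct verification, computing ${}_{A\otimes\BK}\la \nu(x), \nu(y)\ra=\gamma^{\phi}(z^* xy^* z)=z\alpha(z^* xy^* z)z^* =\alpha(xy^* )$ using $\underline{\alpha}(z)=z$ and unitarity of $z$, and $\la \nu(x), \nu(y)\ra_{B\otimes\BK}=\theta|_{B\otimes\BK}^{-1}(\gamma^{\phi}(x^* y))=\beta^{\phi}(\theta|_{B\otimes\BK}^{-1}(x^* y))$ via the conjugacy $\gamma^{\phi}=\theta|_{B\otimes\BK}\circ\beta^{\phi}\circ\theta|_{B\otimes\BK}^{-1}$. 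Your concluding remark recasting the lemma as the composition of Lemma \ref{lem:SM2} with the $\gamma^{\phi}\sim\beta^{\phi}$ equivalence is a correct but inessential addition.
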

\begin{proof} For any $x, y\in X_{\theta}$,
\begin{align*}
{}_{A\otimes\BK} \la \nu(x) \, , \, \nu(y) \ra & =
{}_{A\otimes\BK} \la \gamma^{\phi}(z^* x) \, , \, \gamma^{\phi}(z^* y) \ra=\gamma^{\phi}(z^* xy^* z)
=z\alpha(z^* xy^* z)z^* \\
& =\alpha(xy^* )=\alpha({}_{A\otimes\BK} \la x \, , \, y \ra) , \\
\la \nu(x) \, , \, \nu(y) \ra_{B\otimes\BK} & =
 \la \gamma^{\phi}(z^* x) \, , \, \gamma^{\phi}(z^* y) \ra_{B\otimes\BK}
 =\theta|_{B\otimes\BK}^{-1}(\gamma^{\phi}(x^* y))=\beta^{\phi}(\theta|_{B\otimes\BK}^{-1}(x^* y)) \\
& =\beta^{\phi}(\la x \, , \, y \ra_{B\otimes\BK}) .
\end{align*}
Therefore, we obtain the conclusion.
\end{proof}
Since $\beta^{\phi}=\beta$ or $\beta^{\phi}=\beta^{-1}$, by Lemma \ref{lem:SM3}, $\alpha$ is
strongly Morita equivalent to $\beta$ or $\beta^{-1}$.
\par
(I) We suppose that $\alpha$ is strongly Morita equivalent to $\beta$. Then by Lemma \ref {lem:SM3},
there is the linear automorphism $\nu$ of $X_{\theta}$ satisfying the following:
\newline
(1) $\nu(a\cdot x)=\alpha(a)\cdot \nu(x)$,
\newline
(2) $\nu(x\cdot b)=\nu(x)\cdot \beta(b)$,
\newline
(3) ${}_{A\otimes\BK} \la \nu(x) \, , \, \nu(y)\ra=\alpha({}_{A\otimes\BK} \la x \, ,\, y \ra)$,
\newline
(4) $\la \nu(x) \, , \, \nu(y)\ra_{B\otimes\BK}=\beta(\la x \, ,\, y \ra_{B\otimes\BK})$,
\newline
for any $a\in A\otimes\BK$, $b\in B\otimes\BK$, $x, y\in X_{\theta}$.

\begin{lemma}\label{lem:SM4} With the above notation and assumptions, let $X_{\alpha}$ and $X_{\beta}$ be
the $A\otimes\BK-A\otimes\BK$-equivalence bimodule and the $B\otimes\BK-B\otimes\BK$-equivalence
bimodule induced by $\alpha$ and $\beta$, respectively. Then
$$
X_{\beta}\cong\widetilde{X_{\theta}}\otimes_{A\otimes\BK}X_{\alpha}\otimes_{A\otimes\BK}X_{\theta}
$$
as $B\otimes\BK-B\otimes\BK$-equivalence bimodules.
\end{lemma}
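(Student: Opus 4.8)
The plan is to exhibit an explicit isomorphism and verify it by hand, using the four properties (1)--(4) of $\nu$ recorded just before the statement. Throughout write $\theta_0=\theta|_{B\otimes\BK}$, so that $X_{\theta}$ is the $A\otimes\BK-B\otimes\BK$-equivalence bimodule induced by $\theta_0$; recall that as $\BC$-vector spaces $X_{\alpha}=X_{\theta}=A\otimes\BK$ and $X_{\beta}=B\otimes\BK$, that the right inner products are $\la x,y\ra_{A\otimes\BK}=\alpha^{-1}(x^*y)$ on $X_{\alpha}$, $\la x,y\ra_{B\otimes\BK}=\theta_0^{-1}(x^*y)$ on $X_{\theta}$ and $\la x,y\ra_{B\otimes\BK}=\beta^{-1}(x^*y)$ on $X_{\beta}$, while all left inner products are $xy^*$. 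I would define
\[
\Psi\colon\widetilde{X_{\theta}}\otimes_{A\otimes\BK}X_{\alpha}\otimes_{A\otimes\BK}X_{\theta}\longrightarrow X_{\beta},
\]
with $\Psi(\widetilde{\xi}\otimes a\otimes\eta)=\theta_0^{-1}(\xi^*a\,\nu(\eta))$ for $\xi,\eta\in X_{\theta}$ and $a\in X_{\alpha}$. Conceptually $\Psi$ is the composite of the canonical identifications $\widetilde{X_{\theta}}\cong X_{\theta_0^{-1}}$ and $X_{\pi}\otimes X_{\sigma}\cong X_{\pi\circ\sigma}$, which identify the domain with $X_{\theta_0^{-1}\circ\alpha\circ\theta_0}$, followed by the isomorphism onto $X_{\beta}$ coming from $\nu$; but it is cleaner to verify the displayed formula directly.

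First I would check that $\Psi$ is well defined on the balanced tensor product. Balancing over the first factor $A\otimes\BK$ is immediate since both left actions are by multiplication, and balancing over the second factor is exactly property (1): for $c\in A\otimes\BK$ one has $a\cdot c=a\alpha(c)$ in $X_{\alpha}$ while $\nu(c\cdot\eta)=\alpha(c)\cdot\nu(\eta)=\alpha(c)\nu(\eta)$, so both sides give $\theta_0^{-1}(\xi^*a\alpha(c)\nu(\eta))$. Next I would verify that $\Psi$ is a $B\otimes\BK-B\otimes\BK$-bimodule map: the left action is handled by $\theta_0(b^*)^*=\theta_0(b)$, and the right action uses property (2), namely $\nu(\eta\cdot b)=\nu(\eta)\cdot\beta(b)=\nu(\eta)\theta_0(\beta(b))$, which pushes a factor $\beta(b)$ outside $\theta_0^{-1}$ and reproduces the right action $x\cdot b=x\beta(b)$ on $X_{\beta}$.

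The main work, and the step I expect to be the chief obstacle, is to show that $\Psi$ preserves both inner products; this is where properties (3) and (4) enter and where one must keep careful track of the iterated interior-tensor-product inner products. Expanding the left $B\otimes\BK$-valued inner product on the triple tensor product down through $X_{\theta}$ and $\widetilde{X_{\theta}}$ produces the middle factor $\nu(\eta_1)\nu(\eta_2)^*$, which property (3) converts into $\alpha(\eta_1\eta_2^*)$, matching $\Psi(\cdot)\Psi(\cdot)^*$ in $X_{\beta}$. For the right inner product the domain contributes $\theta_0^{-1}\big(\eta_1^*\alpha^{-1}(a_1^*\xi_1\xi_2^*a_2)\eta_2\big)$, whereas $\la\Psi(\cdot),\Psi(\cdot)\ra_{B\otimes\BK}$ equals $\beta^{-1}\theta_0^{-1}\big(\nu(\eta_1)^*(a_1^*\xi_1\xi_2^*a_2)\nu(\eta_2)\big)$; writing a general element $m=a_1^*\xi_1\xi_2^*a_2$, dense in $A\otimes\BK$, in the form $\nu(\zeta)\nu(\rho)^*=\alpha(\zeta\rho^*)$ via (3) and applying property (4) in the form $\theta_0^{-1}(\nu(x)^*\nu(y))=\beta(\theta_0^{-1}(x^*y))$ collapses the $\beta^{-1}$ together with the factors coming from $\nu$ and yields exactly the domain expression. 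The reconciliation of the two conjugations $\alpha^{-1}$ and $\beta^{-1}$ through (3)--(4) is the delicate bookkeeping one has to get right.

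Finally, since $\Psi$ preserves the inner products it is isometric, hence injective with closed range, and its range is a closed sub-equivalence-bimodule; as $\xi^*a\,\nu(\eta)$ ranges over a dense subset of $A\otimes\BK$ (using that $\nu$ is surjective), $\theta_0^{-1}$ carries this onto a dense subset of $X_{\beta}=B\otimes\BK$, so $\Psi$ is surjective. Thus $\Psi$ is the desired $B\otimes\BK-B\otimes\BK$-equivalence bimodule isomorphism, giving $X_{\beta}\cong\widetilde{X_{\theta}}\otimes_{A\otimes\BK}X_{\alpha}\otimes_{A\otimes\BK}X_{\theta}$.
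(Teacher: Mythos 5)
Your proof is correct and takes essentially the same approach as the paper: writing $\theta_0=\theta|_{B\otimes\BK}$, your map $\Psi(\widetilde{\xi}\otimes a\otimes\eta)=\theta_0^{-1}(\xi^* a\,\nu(\eta))$ is exactly the paper's $\Psi(\widetilde{x}\otimes a\otimes y)=\la x \, , \, a\cdot\nu(y)\ra_{B\otimes\BK}$, and both arguments verify that $\Psi$ preserves the left and right $B\otimes\BK$-valued inner products using the properties (1)--(4) of $\nu$. You additionally spell out well-definedness, the bimodule property, and surjectivity, which the paper treats as routine.
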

\begin{proof} Let $\Psi$ be the map from
$\widetilde{X_{\theta}}\otimes_{A\otimes\BK}X_{\alpha}\otimes_{A\otimes\BK}X_{\theta}$ to
$X_{\beta}$ defined by
$$
\Psi(\widetilde{x}\otimes a\otimes y)=\la x\, , \, a\cdot \nu(y) \ra_{B\otimes\BK}
$$
for any $x, y\in X_{\theta}$, $a\in X_{\alpha}$.
Then for any $x, x_1 , y, y_1 \in X_{\theta}$, $a, a_1 \in X_{\alpha}$,
\begin{align*}
{}_{B\otimes\BK} \la \widetilde{x}\otimes a\otimes y \, , \, \widetilde{x_1}\otimes a_1 \otimes y_1 \ra
& ={}_{B\otimes\BK} \la \widetilde{x}\cdot {}_{A\otimes\BK} \la a\otimes y \, , \, a_1 \otimes y_1 \ra \, , \,
\widetilde{x_1} \ra \\
& =\la \, {}_{A\otimes\BK} \la a_1 \otimes y_1 \, , \, a \otimes y \ra \cdot x \, , \, x_1 \ra_{B\otimes\BK} \\
& =\la \, {}_{A\otimes\BK} \la a_1 \cdot {}_{A\otimes\BK} \la y_ 1 \, , \, y \ra \, , \, a \ra \cdot x \, , \, x_1
\ra_{B\otimes\BK} \\
& =\la \, {}_{A\otimes\BK} \la a_1 \alpha({}_{A\otimes\BK} \la y_1 \, , \, y \ra) \, , \, a \ra
\cdot x \, , \, x_1 \ra_{B\otimes\BK} \\
& =\la a_1 \alpha({}_{A\otimes\BK} \la y_1 \, ,\, y \ra)a^* \cdot x \, , \, x_1 \ra_{B\otimes\BK} .
\end{align*}
On the other hand,
\begin{align*}
{}_{B\otimes\BK} \la \Psi(\widetilde{x}\otimes a\otimes y) \, , \,
\Psi(\widetilde{x_1}\otimes a_1 \otimes y_1 )\ra
& ={}_{B\otimes\BK} \la \la x\, , \, a\cdot \nu(y) \ra_{B\otimes\BK} \, , \,
\la x_1 \, , \, a_1 \cdot \nu(y_1 ) \ra_{B\otimes\BK} \ra \\
& =\la x\, , \, a\cdot\nu(y) \ra_{B\otimes\BK}\, \, \la a_1 \cdot \nu(y_1 ) \, , \, x_1 \ra_{B\otimes\BK} \\
& =\la x\, , \, a\cdot \nu(y)\cdot \la a_1 \cdot \nu(y_1 ) \, , \, x_1 \ra_{B\otimes\BK} \, \, \ra_{B\otimes\BK} \\
& =\la x \, , \, {}_{A\otimes\BK} \la a\cdot \nu(y) \, , \, a_1 \cdot \nu(y_1 ) \ra \cdot x_1 \ra_{B\otimes\BK} \\
& =\la x\, , \, a  \, {}_{A\otimes\BK} \la \nu(y) \, , \, \nu(y_1 ) \ra a_1^* \cdot x_1 \ra_{B\otimes\BK} \\
& =\la a_1 \, {}_{A\otimes\BK} \la \nu(y_1 ) \, , \, \nu(y) \ra a^* \cdot x\, , \, x_1 \ra_{B\otimes\BK} \\
& =\la a_1 \alpha({}_{A\otimes\BK} \la y_1 \, , \, y \ra)a^* \cdot x \, , \, x_1 \ra_{B\otimes\BK} .
\end{align*}
Hence the map $\Psi$ preserves the left $B\otimes\BK$-valued inner products. Also,
\begin{align*}
\la \widetilde{x}\otimes a\otimes y \, , \, \widetilde{x_1}\otimes a_1 \otimes y_1 \ra_{B\otimes\BK}
& =\la y \, , \, \la \widetilde{x}\otimes a \, , \, \widetilde{x_1 }\otimes a_1 \ra_{A\otimes\BK}\cdot y_1
\ra_{B\otimes\BK} \\
& =\la y \, \, \, \la a \, , \, {}_{A\otimes\BK} \la x\, , \, x_1 \ra \cdot a_1 \ra_{A\otimes\BK}\cdot y_1
\ra_{B\otimes\BK} \\
& =\la y \, \, \, \la a \, , \, {}_{A\otimes\BK} \la x\, , \, x_1 \ra a_1 \ra_{A\otimes\BK}\cdot y_1
\ra_{B\otimes\BK} \\
& =\la y\, , \, \alpha^{-1}(a^* \, {}_{A\otimes\BK} \la x\, , \, x_1 \ra a_1 )\cdot y_1 \ra_{B\otimes\BK} .
\end{align*}
On the other hand,
\begin{align*}
\la \Psi(\widetilde{x}\otimes a\otimes y)\, , \, \Psi(\widetilde{x_1}\otimes a_1 \otimes y_1 ) \ra_{B\otimes\BK}
& =\la \la x\, , \, a\cdot\nu(y) \ra_{B\otimes\BK}\, , \, \la x_1 \, , \, a_1 \cdot\nu(y_1 ) \ra_{B\otimes\BK}
\ra_{B\otimes\BK} \\
& =\beta^{-1}(\la a\cdot\nu(y) \, , \, x \ra_{B\otimes\BK}\,
\la x_1 \, , \, a_1 \cdot \nu(y_1 )\ra_{B\otimes\BK}) \\
& =\beta^{-1}(\la a\cdot\nu(y) \, , \, x\cdot\la x_1 \, , \, a_1 \cdot \nu(y_1 )
\ra_{B\otimes\BK} \, \ra_{B\otimes\BK}) \\
& =\beta^{-1}(\la a\cdot\nu(y) \, , \, {}_{A\otimes\BK} \la x \, , \, x_1 \ra a_1 \cdot\nu(y_1 )
\ra_{B\otimes\BK}) \\
& =\beta^{-1}(\la \nu(y) \, , \, a^* {}_{A\otimes\BK} \la x \, ,\, x_1 \ra a_1 \cdot \nu(y_1 ) \ra_{B\otimes\BK}) \\
& =\la y \, , \, \alpha^{-1}(a^* \, {}_{A\otimes\BK} \la x \, , , x_1 \ra a_1 )\cdot y_1 \ra_{B\otimes\BK} .
\end{align*}
Hence the map $\Psi$ preserves the right $B\otimes\BK$-valued inner products. Therefore, we obtain the
conclusion.
\end{proof}

(II) We suppose that $\alpha$ is strongly Morita equivalent to $\beta^{-1}$. Then by Lemma \ref{lem:SM4},
$$
X_{\beta^{-1}}\cong \widetilde{X_{\theta}}\otimes_{A\otimes\BK}X_{\alpha}\otimes_{A\otimes\BK}X_{\theta}
$$
as $B\otimes\BK-B\otimes\BK$-equivalence bimodules. Thus we obtain the following:

\begin{lemma}\label{le:SM5} With the above notation and assumptions,
$$
\widetilde{X_{\beta}}\cong X_{\beta^{-1}}
\cong\widetilde{X_{\theta}}\otimes_{A\otimes\BK} X_{\alpha}\otimes_{A\otimes\BK}X_{\theta}
$$
as $B\otimes\BK-B\otimes\BK$-equivalence bimodules. 
\end{lemma}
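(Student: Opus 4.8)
The plan is to prove the two isomorphisms in the displayed chain separately and then concatenate them. The right-hand isomorphism, $X_{\beta^{-1}}\cong\widetilde{X_{\theta}}\otimes_{A\otimes\BK}X_{\alpha}\otimes_{A\otimes\BK}X_{\theta}$, is already available: under the case-(II) hypothesis that $\alpha$ is strongly Morita equivalent to $\beta^{-1}$, the map $\Psi$ constructed in Lemma \ref{lem:SM4}, run verbatim with $\beta$ replaced by $\beta^{-1}$ and with the linear automorphism $\nu$ of $X_{\theta}$ supplied by Lemma \ref{lem:SM3} in the subcase $\beta^{\phi}=\beta^{-1}$, is an equivalence bimodule isomorphism delivering exactly this identification. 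Hence the only new work is the left-hand isomorphism $\widetilde{X_{\beta}}\cong X_{\beta^{-1}}$, which is the general statement that the dual of the equivalence bimodule induced by an automorphism is the bimodule induced by its inverse.

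To establish $\widetilde{X_{\beta}}\cong X_{\beta^{-1}}$, I would recall that, writing $C=B\otimes\BK$, the bimodule $X_{\beta}$ is $C$ with $c\cdot x=cx$, $x\cdot c'=x\beta(c')$, ${}_C\la x, y\ra=xy^*$ and $\la x, y\ra_C=\beta^{-1}(x^* y)$, while $X_{\beta^{-1}}$ is $C$ with $x\cdot c'=x\beta^{-1}(c')$ and $\la x, y\ra_C=\beta(x^* y)$. I would then define
$$
\Phi(\widetilde{x})=\beta^{-1}(x^* )
$$
for $x\in X_{\beta}$. Since $x\mapsto\widetilde{x}$ and $x\mapsto\beta^{-1}(x^*)$ are both conjugate linear, $\Phi$ is a well-defined linear map; it is bijective with inverse $u\mapsto\widetilde{\beta(u)^*}$.

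It then remains to verify that $\Phi$ intertwines the two left and the two right $C$-actions and preserves both $C$-valued inner products, using the standard dual-bimodule formulas $c\cdot\widetilde{x}=\widetilde{x\cdot c^*}$, $\widetilde{x}\cdot c'=\widetilde{c'^*\cdot x}$, ${}_C\la\widetilde{x}, \widetilde{y}\ra=\la x, y\ra_C$ and $\la\widetilde{x}, \widetilde{y}\ra_C={}_C\la x, y\ra$. Each check collapses to one line after invoking $\beta(c^*)=\beta(c)^*$; for example the left action gives $\Phi(c\cdot\widetilde{x})=\beta^{-1}((x\beta(c^*))^*)=c\,\beta^{-1}(x^*)=c\cdot\Phi(\widetilde{x})$, and similarly $\la\Phi(\widetilde{x}),\Phi(\widetilde{y})\ra_C=\beta(\beta^{-1}(x)\beta^{-1}(y^*))=xy^*={}_C\la x,y\ra$. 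I expect no genuine obstacle: the only substantive point is to twist by $\beta^{-1}$ rather than take the naive assignment $\widetilde{x}\mapsto x^*$, the twist being precisely what is forced by matching the right action $x\cdot c'=x\beta^{-1}(c')$ of $X_{\beta^{-1}}$. Combining this isomorphism with the one from Lemma \ref{lem:SM4} yields the asserted chain $\widetilde{X_{\beta}}\cong X_{\beta^{-1}}\cong\widetilde{X_{\theta}}\otimes_{A\otimes\BK}X_{\alpha}\otimes_{A\otimes\BK}X_{\theta}$.
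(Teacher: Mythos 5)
Your proposal is correct and follows essentially the same route as the paper: the paper obtains $X_{\beta^{-1}}\cong\widetilde{X_{\theta}}\otimes_{A\otimes\BK}X_{\alpha}\otimes_{A\otimes\BK}X_{\theta}$ exactly as you do, by applying Lemma \ref{lem:SM4} in case (II) with $\beta^{-1}$ in place of $\beta$, and it treats $\widetilde{X_{\beta}}\cong X_{\beta^{-1}}$ as a standard duality fact stated without proof. Your explicit isomorphism $\widetilde{x}\mapsto\beta^{-1}(x^{*})$, with the $\beta^{-1}$-twist correctly forced by matching the right action of $X_{\beta^{-1}}$, simply fills in the detail the paper leaves implicit.
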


We recall that there is an $(A\otimes\BK)\rtimes_{\alpha}\BZ- (B\otimes\BK)\rtimes_{\beta}\BZ$-
equivalence bimodule isomorphism $\Phi$ of $N\otimes\BK$ onto $Y_{\theta}$ such that
$\Phi|_{M\otimes\BK}$ is an $A\otimes\BK-B\otimes\BK$-equivalence bimodule isomorphism of
$M\otimes\BK$ onto $X_{\theta}$. We identify $M\otimes\BK$ with $X_{\theta}$ by $\Phi|_{M\otimes\BK}$.
Then
$$
X_{\beta} \cong \widetilde{(M\otimes\BK)}\otimes_{A\otimes\BK}X_{\alpha}\otimes_{A\otimes\BK}
(M\otimes\BK)
$$
or
$$
\widetilde{X_{\beta}}\cong(\widetilde{M\otimes\BK)}\otimes_{A\otimes\BK}X_{\alpha}
\otimes_{A\otimes\BK}(M\otimes\BK)
$$
as $B\otimes\BK-B\otimes\BK$-equivalence bimodules. Also, we recall that $X_{\alpha}\cong X\otimes\BK$
as $A\otimes\BK-A\otimes\BK$-equivalence bimodules and that $X_{\beta}\cong Y\otimes\BK$ as
$B\otimes\BK-B\otimes\BK$-equivalence bimodules. Thus
$$
Y\otimes\BK\cong \widetilde{(M\otimes\BK)}\otimes_{A\otimes\BK}(X\otimes\BK)
\otimes_{A\otimes\BK}(M\otimes\BK)
$$
or
$$
\widetilde{Y\otimes\BK}\cong(\widetilde{M\otimes\BK)}\otimes_{A\otimes\BK}(X\otimes\BK)
\otimes_{A\otimes\BK}(M\otimes\BK)
$$
as $B\otimes\BK-B\otimes\BK$-equivalence bimodules. Furthermore, by Lemma \ref{lem:pre3}
$$
X\otimes\BK\cong H_A \otimes_A X \otimes_A \widetilde{H_A}
$$
as $A\otimes\BK-A\otimes\BK$-equivalence bimodues and
$$
Y\otimes\BK\cong H_B \otimes_B Y\otimes_B \widetilde{H_B}
$$
as $B\otimes\BK-B\otimes\BK$-equivalence bimodues. Thus
$$
Y\cong\widetilde{H_B}\otimes_{B\otimes\BK}(\widetilde{M\otimes\BK})\otimes_{A\otimes\BK} H_A \otimes_A
X\otimes_A \widetilde{H_A}\otimes_{A\otimes\BK}(M\otimes\BK)\otimes_{B\otimes\BK}H_B
$$
or
$$
\widetilde{Y}\cong\widetilde{H_B}\otimes_{B\otimes\BK}(\widetilde{M\otimes\BK})
\otimes_{A\otimes\BK} H_A \otimes_A
X\otimes_A \widetilde{H_A}\otimes_{A\otimes\BK}(M\otimes\BK)\otimes_{B\otimes\BK}H_B
$$
as $B\otimes\BK-B\otimes\BK$-equivalence bimodues. Also, by Lemma \ref{lem:pre3},
$$
\widetilde{H_A}\otimes_{A\otimes\BK}(M\otimes\BK)\otimes_{B\otimes\BK} H_B \cong M
$$
$A-B$-equivalence bimodules. Hence
$$
Y\cong\widetilde{M}\otimes_A X\otimes_A M \quad \text{or}\quad
\widetilde{Y}\cong\widetilde{M}\otimes_A X\otimes_A M
$$
as $B-B$-equivalence bimodules. Therefore, we obtain the following:

\begin{thm}\label{thm:SM6} Let $A$ and $B$ be $\sigma$-unital $C^*$-algebras and $X$ and $Y$
an $A-A$-equivalence bimodule and a $B-B$-equivalence bimodule, respectively. Let
$A\subset A\rtimes_X \BZ$ and $B\subset B\rtimes_Y \BZ$ be the inclusions of $C^*$-algebras
induced by $X$ and $Y$, respectively. We suppose that $A' \cap M(A\rtimes_X \BZ)\cong \BC1$.
Then the following conditions are equivalent:
\newline
$(1)$ $A\subset A\rtimes_X \BZ$ and $B\subset B\rtimes_Y \BZ$ are strongly Morita equivalent
with respect to an $A\rtimes_X \BZ -B\rtimes_Y \BZ$-equivalence bimodule $N$ and its closed
subspace $M$,
\newline
$(2)$ $Y\cong \widetilde{M}\otimes_A X \otimes_A M$ or 
$\widetilde{Y}\cong \widetilde{M}\otimes_A X \otimes_A M$
as $B-B$-equivalence bimodules.
\end{thm}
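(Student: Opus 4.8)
The plan is to prove the two implications separately. The implication $(2)\Rightarrow(1)$ is already in hand: it is precisely the content of Lemma \ref{lem:pre4}. Indeed, given an $A-B$-equivalence bimodule $M$ with $Y\cong\widetilde{M}\otimes_A X\otimes_A M$ or $\widetilde{Y}\cong\widetilde{M}\otimes_A X\otimes_A M$, that lemma produces an $A\rtimes_X\BZ-B\rtimes_Y\BZ$-equivalence bimodule $N$ containing $M$ as a closed subspace and implementing a strong Morita equivalence of the two inclusions. So for this direction I would simply invoke Lemma \ref{lem:pre4}, and the nontrivial work lies entirely in $(1)\Rightarrow(2)$.

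For $(1)\Rightarrow(2)$ I would first stabilize. Tensoring the given data with $\BK$ converts the strong Morita equivalence into one of the inclusions $A\otimes\BK\subset(A\otimes\BK)\rtimes_{X\otimes\BK}\BZ$ and $B\otimes\BK\subset(B\otimes\BK)\rtimes_{Y\otimes\BK}\BZ$ with respect to $N\otimes\BK$ and its closed subspace $M\otimes\BK$. The hypothesis $A'\cap M(A\rtimes_X\BZ)=\BC1$ transfers along the identification of $A\otimes\BK\subset(A\otimes\BK)\rtimes_{X\otimes\BK}\BZ$ with $A\otimes\BK\subset(A\rtimes_X\BZ)\otimes\BK$, via \cite[Lemma 3.1]{Kodaka:countable}, to the relative commutant condition $(A\otimes\BK)'\cap((A\otimes\BK)\rtimes_{X\otimes\BK}\BZ)=\BC1$. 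Next I would use $\sigma$-unitality to replace the crossed products by equivalence bimodules with ordinary crossed products by automorphisms: Lemma \ref{lem:pre5} supplies automorphisms $\alpha$ of $A\otimes\BK$ and $\beta$ of $B\otimes\BK$ with the stabilized inclusions isomorphic to $A\otimes\BK\subset(A\otimes\BK)\rtimes_{\alpha}\BZ$ and $B\otimes\BK\subset(B\otimes\BK)\rtimes_{\beta}\BZ$. A Brown--Green--Rieffel type argument as in \cite[Proposition 3.1]{BGR:linking} then yields an isomorphism $\theta$ of $(B\otimes\BK)\rtimes_{\beta}\BZ$ onto $(A\otimes\BK)\rtimes_{\alpha}\BZ$ that restricts to an isomorphism of the subalgebras and carries $N\otimes\BK$ (resp. $M\otimes\BK$) onto the bimodule $Y_{\theta}$ (resp. $X_{\theta}$) induced by $\theta$. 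Setting $\gamma=\theta|_{B\otimes\BK}\circ\beta\circ\theta|_{B\otimes\BK}^{-1}$, the computations of Lemmas \ref{lem:SM1}--\ref{lem:SM3} show that $\alpha$ is strongly Morita equivalent to $\beta$ or to $\beta^{-1}$.

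The crux, and the step I expect to be the main obstacle, is passing from strong Morita equivalence of the automorphisms to \emph{exterior} equivalence. Here the relative commutant hypothesis is indispensable: following the proof of \cite[Theorem 5.5]{Kodaka:countable}, one extracts an automorphism $\phi$ of $\BZ$ and a unitary $z\in M(A\otimes\BK)$ with $\gamma^{\phi}=\Ad(z)\circ\alpha$ and $\underline{\alpha}(z)=z$, where $\gamma^{\phi}$ equals $\gamma$ or $\gamma^{-1}$. This dichotomy in $\phi$ is exactly the source of the two alternatives $Y\cong\widetilde{M}\otimes_A X\otimes_A M$ and $\widetilde{Y}\cong\widetilde{M}\otimes_A X\otimes_A M$; proving that the commutant condition forces the implementing unitary to be $\alpha$-fixed is the delicate point.

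Finally I would translate and destabilize. Lemmas \ref{lem:SM4} and \ref{le:SM5} convert the exterior equivalence into the bimodule isomorphism $X_{\beta}\cong\widetilde{X_{\theta}}\otimes_{A\otimes\BK}X_{\alpha}\otimes_{A\otimes\BK}X_{\theta}$, or its dual in the $\beta^{-1}$ case. Identifying $M\otimes\BK$ with $X_{\theta}$ through $\Phi|_{M\otimes\BK}$ and recalling $X_{\alpha}\cong X\otimes\BK$ and $X_{\beta}\cong Y\otimes\BK$, I would then use Lemma \ref{lem:pre3} to cancel the ambient copies of $H_A$, $H_B$ and descend the isomorphism from $B\otimes\BK-B\otimes\BK$-equivalence bimodules to $B-B$-equivalence bimodules, recovering $Y\cong\widetilde{M}\otimes_A X\otimes_A M$ or $\widetilde{Y}\cong\widetilde{M}\otimes_A X\otimes_A M$ and thereby establishing condition $(2)$.
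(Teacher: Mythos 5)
Your proposal is correct and follows essentially the same route as the paper: $(2)\Rightarrow(1)$ by Lemma \ref{lem:pre4}, and $(1)\Rightarrow(2)$ by stabilizing, transferring the relative commutant condition via \cite[Lemma 3.1]{Kodaka:countable}, replacing the bimodule crossed products by automorphism crossed products (Lemma \ref{lem:pre5}), extracting $\theta$ by the Brown--Green--Rieffel argument, obtaining the exterior equivalence $\gamma^{\phi}=\Ad(z)\circ\alpha$ from the proof of \cite[Theorem 5.5]{Kodaka:countable}, and destabilizing with Lemmas \ref{lem:SM4}, \ref{le:SM5} and \ref{lem:pre3}. The only cosmetic difference is ordering: in the paper the exterior equivalence is established \emph{before} Lemmas \ref{lem:SM2} and \ref{lem:SM3} (whose maps $\mu$ and $\nu$ are built from the unitary $z$), whereas you present Lemmas \ref{lem:SM1}--\ref{lem:SM3} as yielding strong Morita equivalence of $\alpha$ with $\beta$ or $\beta^{-1}$ first --- but you correctly identify where the hypothesis $A'\cap M(A\rtimes_X\BZ)=\BC 1$ enters and where the two alternatives in $(2)$ come from.
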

\begin{proof} (1)$\Rightarrow$(2): This is immediate by the above discussions.
(2)$\Rightarrow$(1): This is immediate by Lemma \ref{lem:pre4}.
\end{proof}

\section{The Picard groups}\label{sec:Picard} Let $A$ be a unital $C^*$-algebra and
$X$ an $A-A$-equivalence bimodule. Let $A\subset A\rtimes_X \BZ$ be the inclusion of
unital $C^*$-algebra induced by $X$. We suppose that $A' \cap(A\rtimes_X \BZ)=\BC 1$.
In this section, we shall compute $\Pic(A, A\rtimes_X \BZ)$, the Picard group of the inclusion
$A\subset A\rtimes_X \BZ$ (See \cite {Kodaka:Picard2}).
\par
Let $G$ be the subgroup of $\Pic(A)$ defined by
\begin{align*}
G=\{[M]\in\Pic(A) \, | \, X\cong\widetilde{M}\otimes_A X\otimes_A M \, \text{or} \,
\widetilde{X} & \cong\widetilde{M}\otimes_A X\otimes_A M \\
& \text{as $A-A$-equivalence bimodules}\}
\end{align*}
Let $f_A$ be the homomorphism of $\Pic(A, A\rtimes_X \BZ)$ to $\Pic(A)$
defined by
$$
f_A ([M, N])=[M]
$$
for any $[M, N]\in\Pic(A, A\rtimes_X \BZ)$. First, we show $\Ima f_A=G$, where $\Ima f_A$ is the image
of $f_A$.

\begin{lemma}\label{lem:P1} With the above notation, $\Ima f_A =G$.
\end{lemma}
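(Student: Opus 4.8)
The plan is to prove the two inclusions $\Ima f_A\subseteq G$ and $G\subseteq\Ima f_A$ separately, in each case reducing to the equivalence established in Theorem \ref{thm:SM6} applied with $B=A$ and $Y=X$. Before doing so I would record a preliminary observation that reconciles the hypotheses: since $A$ is unital, its unit is a unit for $A\rtimes_X\BZ$, so $A\rtimes_X\BZ$ is unital and $M(A\rtimes_X\BZ)=A\rtimes_X\BZ$. Consequently the standing assumption $A'\cap(A\rtimes_X\BZ)=\BC 1$ of this section coincides with the assumption $A'\cap M(A\rtimes_X\BZ)=\BC 1$ required in Theorem \ref{thm:SM6}, which is precisely what allows the results of Section \ref{sec:SM} to be invoked in the present setting.

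For the inclusion $\Ima f_A\subseteq G$, let $[M,N]\in\Pic(A,A\rtimes_X\BZ)$. By the definition of the Picard group of the inclusion, $N$ is an $A\rtimes_X\BZ-A\rtimes_X\BZ$-equivalence bimodule having $M$ as a closed subspace, and $A\subset A\rtimes_X\BZ$ is strongly Morita equivalent to itself with respect to $N$ and its closed subspace $M$. Taking $B=A$ and $Y=X$ in Theorem \ref{thm:SM6}, condition $(1)$ of that theorem is satisfied, hence so is condition $(2)$; that is, $X\cong\widetilde{M}\otimes_A X\otimes_A M$ or $\widetilde{X}\cong\widetilde{M}\otimes_A X\otimes_A M$ as $A-A$-equivalence bimodules. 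By the definition of $G$ this says exactly that $[M]=f_A([M,N])\in G$.

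For the reverse inclusion $G\subseteq\Ima f_A$, let $[M]\in G$, so that $X\cong\widetilde{M}\otimes_A X\otimes_A M$ or $\widetilde{X}\cong\widetilde{M}\otimes_A X\otimes_A M$ as $A-A$-equivalence bimodules. Applying Lemma \ref{lem:pre4} with $B=A$ and $Y=X$ then produces an $A\rtimes_X\BZ-A\rtimes_X\BZ$-equivalence bimodule $N$ containing $M$ as a closed subspace such that $A\subset A\rtimes_X\BZ$ is strongly Morita equivalent to itself with respect to $N$ and $M$. Thus the pair $(M,N)$ represents a class $[M,N]\in\Pic(A,A\rtimes_X\BZ)$ with $f_A([M,N])=[M]$, whence $[M]\in\Ima f_A$.

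I do not anticipate a genuine obstacle: both inclusions are essentially immediate consequences of Theorem \ref{thm:SM6} and Lemma \ref{lem:pre4}, which together amount to the implications $(1)\Rightarrow(2)$ and $(2)\Rightarrow(1)$ in the self-equivalence case $B=A$, $Y=X$. The only points requiring care are the preliminary reconciliation of the two forms of the centralizer hypothesis via the unitality of $A$, and a verification that the pair $(M,N)$ supplied by Lemma \ref{lem:pre4} genuinely satisfies the compatibility conditions defining membership in $\Pic(A,A\rtimes_X\BZ)$ — but the latter is exactly what the statement of Lemma \ref{lem:pre4} asserts. One should also note that the well-definedness of $f_A$ on equivalence classes is part of its definition and is independent of the identification of its image carried out here.
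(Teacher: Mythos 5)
Your proof is correct and follows essentially the same route as the paper: the inclusion $\Ima f_A\subseteq G$ is obtained from Theorem \ref{thm:SM6} applied with $B=A$, $Y=X$, and the reverse inclusion from Lemma \ref{lem:pre4}, exactly as in the paper's own argument. Your preliminary remark that unitality of $A$ makes $M(A\rtimes_X\BZ)=A\rtimes_X\BZ$, reconciling the centralizer hypotheses of Section \ref{sec:Picard} and Theorem \ref{thm:SM6}, is a point the paper leaves implicit but does not change the approach.
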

\begin{proof} Let $[M, N]\in\Pic(A, A\rtimes_X \BZ)$. Then by the definition of $\Pic(A, A\rtimes_X \BZ)$,
the inclusion $A\subset A\rtimes_X \BZ$ is strongly Morita equivalent to itself with respect to an
$A\rtimes_X \BZ -A\rtimes_X \BZ$-equivalence bmodule $N$ and its closed subspace $M$.
Hence by Theorem \ref{thm:SM6}, $X\cong\widetilde{M}\otimes_A X\otimes_A M$ or
$\widetilde{X}\cong\widetilde{M}\otimes_A X\otimes_A M$ as $A-A$-equivalence bimodules. Thus
$\Ima f_A \subset G$. Next, let $[M]\in G$. Then by Lemma \ref{lem:pre4}, there is an $A\rtimes_X \BZ-
A\rtimes_X \BZ$-equivalence bimodule $N$ satisfying following:
\newline
(1) $M$ is included in $N$ as a closed subspace,
\newline
(2) $[M, N]\in\Pic(A, A\rtimes_X \BZ)$.
\newline
Hence $G\subset \Ima f_A$. Therefore, we obtain the conclusion.
\end{proof}

Next, we compute $\Ker f_A$, the kernel of $f_A$. Let $\Aut (A, A\rtimes_X \BZ)$ be the group of
all automorphisms $\alpha$ of $A\rtimes_X \BZ$ such that $\alpha|_A$ is an automorphism of $A$.
Let $\Aut_0 (A, A\rtimes_X \BZ)$ be the group of all automorphisms $\alpha$ of $A\rtimes_X \BZ$ such
that $\alpha|_A =\id$ on $A$. It is clear that $\Aut_0 (A, A\rtimes_X \BZ)$ is a normal subgroup of
$\Aut (A, A\rtimes_X \BZ)$. Let $\pi$ be the homomorphism of $\Aut(A, A\rtimes_X \BZ)$ to
$\Pic(A, A\rtimes_X \BZ)$ defined by
$$
\pi(\alpha)=[M_{\alpha}, N_{\alpha}]
$$
for any $\alpha\in \Aut(A, A\rtimes_X \BZ)$, where $[M_{\alpha}, N_{\alpha}]$ is an element in
$\Pic(A, A\rtimes_X \BZ)$ induced by $\alpha$ (See \cite [Section 3]{Kodaka:Picard2}).

\begin{lemma}\label{lem:P2} With the above notation,
$$
\Ker f_A =\{[A, N_{\beta}]\in\Pic(A, A\rtimes_X \BZ)  \, | \, \beta\in\Aut_0 (A, A\rtimes_X \BZ) \} .
$$
\end{lemma}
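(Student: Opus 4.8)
The plan is to prove the two inclusions separately. Throughout write $C=A\rtimes_X\BZ$, and recall that since $A$ is unital $1_A$ is a unit for $C$, so $C$ is unital with $1_C=1_A$. The inclusion $\supseteq$ is the easy half: for $\beta\in\Aut_0(A,A\rtimes_X\BZ)$ the induced class is $\pi(\beta)=[M_\beta,N_\beta]$, and because $\beta|_A=\id$ the induced $A-A$-equivalence bimodule $M_\beta=A_{\beta|_A}$ is the trivial bimodule $A$. Hence $\pi(\beta)=[A,N_\beta]\in\Pic(A,A\rtimes_X\BZ)$ and $f_A([A,N_\beta])=[M_\beta]=[A]$, the identity of $\Pic(A)$. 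Thus every class of the asserted form lies in $\Ker f_A$.

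For the inclusion $\subseteq$, take $[M,N]\in\Ker f_A$. Then $[M]=[A]$ in $\Pic(A)$, so $M\cong A$ as $A-A$-equivalence bimodules; let $\xi\in M$ be the image of $1_A$, so that ${}_A\la\xi,\xi\ra=1_A=\la\xi,\xi\ra_A$ and $a\cdot\xi=\xi\cdot a$ for all $a\in A$. Since $A\subset A\rtimes_X\BZ$ is strongly Morita equivalent to itself with respect to $(N,M)$, the $A$-valued inner products on $M$ are the restrictions of the $C$-valued inner products on $N$, so ${}_C\la\xi,\xi\ra=1_C=\la\xi,\xi\ra_C$. First I would show that the two maps $c\mapsto c\cdot\xi$ and $c\mapsto\xi\cdot c$ are isometric linear bijections of $C$ onto $N$: they are isometric because each inner product of $\xi$ with itself is $1_C$, and they are surjective because every $\eta\in N$ satisfies $\eta={}_C\la\eta,\xi\ra\cdot\xi=\xi\cdot\la\xi,\eta\ra_C$, using $\la\xi,\xi\ra_C=1_C={}_C\la\xi,\xi\ra$.

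Next I would define $\gamma$ by $c\cdot\xi=\xi\cdot\gamma(c)$ (well defined by the two bijections above) and set $\beta=\gamma^{-1}$. Expanding $(cd)\cdot\xi$ in two ways gives $\gamma(cd)=\gamma(c)\gamma(d)$, and comparing $\la\xi,c\cdot\xi\ra_C$ computed via the right and the left variable gives $\gamma(c^*)=\gamma(c)^*$; hence $\gamma$, and therefore $\beta$, is a $*$-automorphism of $C$. Because $a\cdot\xi=\xi\cdot a$ for $a\in A$ and the map $c\mapsto\xi\cdot c$ is injective, we get $\gamma|_A=\id$, so $\beta\in\Aut_0(A,A\rtimes_X\BZ)$. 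Finally the map $\Theta(\eta)={}_C\la\eta,\xi\ra$, which sends $c\cdot\xi$ to $c$, is a $C-C$-equivalence bimodule isomorphism of $N$ onto $N_\beta$; in particular it carries $M=A\cdot\xi$ onto the trivial subbimodule $A=M_\beta$ and restricts on $M$ to the $A-A$-equivalence bimodule isomorphism $a\cdot\xi\mapsto a$. Therefore $[M,N]=[M_\beta,N_\beta]=[A,N_\beta]$, which gives the reverse inclusion and completes the equality.

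The main obstacle is the bookkeeping in the previous paragraph: verifying that $\gamma$ is genuinely multiplicative and $*$-preserving, and that $\Theta$ intertwines \emph{both} the left and the right $C$-valued inner products with the precise conventions defining $N_\beta$. This is exactly the place where the normalization $\beta$ versus $\beta^{-1}$ must be tracked carefully so that the resulting class is literally $\pi(\beta)$ rather than $\pi(\beta^{-1})$ (both lying in $\Aut_0$, so either suffices for the stated set equality). The conceptual heart is the observation that triviality of $M$ upgrades the unit vector of $M$ to a unit vector $\xi$ for the ambient $C$-valued inner products, which is what forces $N$ to be an automorphism bimodule; the standing hypothesis $A'\cap(A\rtimes_X\BZ)=\BC1$ of this section is used in the companion identification of $\Ima f_A$ (Lemma \ref{lem:P1} via Theorem \ref{thm:SM6} and Lemma \ref{lem:pre4}) and is available throughout, while the present kernel computation is this direct structural argument.
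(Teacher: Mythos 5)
Your proof is correct, but it takes a genuinely different route from the paper. The paper's own proof of this lemma is essentially a citation: given $[M,N]\in\Ker f_A$, it notes $[M]=[A]$ in $\Pic(A)$ and then invokes \cite[Lemma 7.5]{Kodaka:Picard2} to produce $\beta\in\Aut_0(A,A\rtimes_X\BZ)$ with $[M,N]=[A,N_{\beta}]$ (the reverse inclusion being immediate from the definition of $\pi$ and $f_A$). What you have done is reconstruct, from scratch, the content of that cited lemma: trivializing $M$ by a unit vector $\xi$ with ${}_A\la\xi,\xi\ra=\la\xi,\xi\ra_A=1$ and $a\cdot\xi=\xi\cdot a$, noting that the inclusion structure upgrades these to $C$-valued identities ${}_C\la\xi,\xi\ra=\la\xi,\xi\ra_C=1$ with $C=A\rtimes_X\BZ$, so that $c\mapsto c\cdot\xi$ and $c\mapsto\xi\cdot c$ are bijections $C\to N$, and then extracting the automorphism $\gamma$ via $c\cdot\xi=\xi\cdot\gamma(c)$. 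Your normalization $\beta=\gamma^{-1}$ is the right one for the paper's conventions on induced bimodules (where $x\cdot d=x\beta(d)$ and $\la x,y\ra_C=\beta^{-1}(x^*y)$ in $N_{\beta}$): with it, $\Theta(\eta)={}_C\la\eta,\xi\ra$ does intertwine both module actions and both inner products, and carries $M=A\cdot\xi$ onto the trivial copy of $A$, so $[M,N]=[A,N_{\beta}]$ as classes in $\Pic(A,A\rtimes_X\BZ)$. Your argument buys self-containedness — it makes visible exactly where unitality of $A$ (hence of $C$, since $1_A$ is a unit for the crossed product) is used, and it shows the kernel computation needs no hypothesis beyond the definitions, the assumption $A'\cap(A\rtimes_X\BZ)=\BC 1$ entering only later (in Lemma \ref{lem:P3}) to kill $\Ker\,\pi\cap\Aut_0$; the paper's citation buys brevity at the cost of sending the reader to \cite{Kodaka:Picard2}. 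You also prove the easy inclusion $\supseteq$ explicitly, which the paper leaves implicit.
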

\begin{proof} Let $[M, N]\in\Ker f_A$. Then $[M]=[A]$ in $\Pic(A)$ and by \cite [Lemma 7.5]{Kodaka:Picard2},
there is a $\beta\in\Aut_0 (A, A\rtimes_X \BZ)$ such that
$$
[M, N]=[A, N_{\beta}]
$$
in $\Pic(A, A\rtimes_X \BZ)$, where $N_{\beta}$ is the $A\rtimes_X \BZ -A\rtimes_X \BZ$-equivalence
bimodule induced by $\beta$. Therefore, we obtain the conclusion.
\end{proof}

Let $\Int(A, A\rtimes_X \BZ)$ be the group of all $\Ad(u)$ such that $u$ is a unitary element in $A$.
By \cite [Lemma 3.4]{Kodaka:Picard2},
$$
\Ker \, \pi \cap\Aut_0 (A, A\rtimes_X\BZ)=\Int(A, A\rtimes_X \BZ)\cap \Aut_0 (A, A\rtimes_X \BZ) .
$$
Hence
\begin{align*}
& \Ker \, \pi\cap\Aut_0 (A, A\rtimes_X \BZ) \\
& =\{\Ad(u)\in\Aut_0 (A, A\rtimes_X \BZ) \, | \, \text{$u$ is a unitary
element in $A$}\} \\
& =\{\Ad(u)\in\Aut_0 (A, A\rtimes_X \BZ) \, | \, \text{$u$ is a unitary
element in $A' \cap A$}\} .
\end{align*}
Since $A' \cap (A\rtimes_X \BZ)=\BC1$, $A' \cap A=\BC1$. Thus
$$
\Ker \, \pi \cap\Aut_0 (A, A\rtimes_X \BZ)=\{1\}.
$$
It follows that we can obtain the following lemma:

\begin{lemma}\label{lem:P3} With the above notation, $\Ker f_A \cong \Aut_0 (A, A\rtimes_X \BZ)$.
\end{lemma}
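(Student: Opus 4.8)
The plan is to realize the desired isomorphism as the restriction of the homomorphism $\pi$ to the subgroup $\Aut_0(A, A\rtimes_X \BZ)$, and then to invoke the first isomorphism theorem. First I would record that for any $\beta\in\Aut_0(A, A\rtimes_X \BZ)$ the bimodule $M_{\beta}$ attached to $\beta$ is the $A-A$-equivalence bimodule induced by $\beta|_A=\id$ on $A$. By the construction of $X_{\pi}$ recalled in Section \ref{sec:pre}, the induced bimodule $X_{\id}$ is just $A$ with its trivial $A-A$-equivalence bimodule structure, so $M_{\beta}\cong A$ and hence $\pi(\beta)=[A, N_{\beta}]$. In particular $f_A(\pi(\beta))=[A]$, the identity of $\Pic(A)$, so the restriction $\pi|_{\Aut_0(A, A\rtimes_X \BZ)}$ takes values in $\Ker f_A$.

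Next I would establish that this restriction maps $\Aut_0(A, A\rtimes_X \BZ)$ \emph{onto} $\Ker f_A$. This is precisely the content of Lemma \ref{lem:P2}: every element of $\Ker f_A$ has the form $[A, N_{\beta}]$ for some $\beta\in\Aut_0(A, A\rtimes_X \BZ)$, and $[A, N_{\beta}]=\pi(\beta)$ by the previous paragraph. Hence $\pi|_{\Aut_0(A, A\rtimes_X \BZ)}$ is surjective onto $\Ker f_A$.

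For injectivity I would identify the kernel of $\pi|_{\Aut_0(A, A\rtimes_X \BZ)}$ with $\Ker\,\pi\cap\Aut_0(A, A\rtimes_X \BZ)$, and then appeal to the computation carried out just before the statement: using \cite[Lemma 3.4]{Kodaka:Picard2} together with $A'\cap A=\BC 1$ (which follows from the standing hypothesis $A'\cap(A\rtimes_X \BZ)=\BC 1$), this intersection equals $\{1\}$. Therefore $\pi|_{\Aut_0(A, A\rtimes_X \BZ)}$ is injective.

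Combining the three observations, $\pi|_{\Aut_0(A, A\rtimes_X \BZ)}$ is a group isomorphism of $\Aut_0(A, A\rtimes_X \BZ)$ onto $\Ker f_A$, which yields $\Ker f_A\cong\Aut_0(A, A\rtimes_X \BZ)$ as claimed. The only step that needs genuine verification rather than a direct citation is the identification $M_{\beta}\cong A$ for $\beta\in\Aut_0(A, A\rtimes_X \BZ)$; once that is in place, surjectivity is Lemma \ref{lem:P2}, injectivity is the already-computed triviality of $\Ker\,\pi\cap\Aut_0(A, A\rtimes_X \BZ)$, and the conclusion is the first isomorphism theorem, so I do not expect any serious obstacle here.
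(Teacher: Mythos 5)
Your proposal is correct and follows essentially the same route as the paper: the paper's proof likewise writes $\Ker f_A=\pi(\Aut_0(A, A\rtimes_X \BZ))$ using Lemma \ref{lem:P2} and concludes via the first isomorphism theorem from the already-established triviality of $\Ker\,\pi\cap\Aut_0(A, A\rtimes_X \BZ)$. Your only addition is to spell out explicitly the identification $M_{\beta}\cong A$ (hence $\pi(\beta)=[A, N_{\beta}]$) for $\beta\in\Aut_0(A, A\rtimes_X \BZ)$, which the paper leaves implicit in its use of the notation $[A, N_{\beta}]$.
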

\begin{proof} Since $\Ker \, \pi\cap\Aut_0 (A, A\rtimes_X \BZ)=\{1\}$, by Lemma \ref{lem:P2}
\begin{align*}
\Ker f_A & =\pi(\Aut_0 (A, A\rtimes_X \BZ))\cong\Aut_0 (A, A\rtimes_X \BZ)/
(\Ker \, \pi\cap\Aut_0 (A, A\rtimes_X \BZ) )\\
& =\Aut_0 (A, A\rtimes_X \BZ) .
\end{align*}
Therefore, we obtain the conclusion.
\end{proof}

We recall that the inclusions of $C^*$-algebras $A\otimes\BK\subset (A\rtimes_X \BZ)\otimes\BK$ and
$A\otimes\BK\subset (A\otimes\BK)\rtimes_{X\otimes\BK}\BZ$ are isomorphic as inclusions of $C^*$-algebras.
Also, there is an automorphism $\alpha$ of $A\otimes\BK$ such that
$A\otimes\BK\subset (A\otimes\BK)\rtimes_{X\otimes\BK}\BZ$ and
$A\otimes\BK\subset (A\otimes\BK)\rtimes_{\alpha}\BZ$ are isomorphic as inclusions of $C^*$-algebras.

\begin{lemma}\label{lem:P4} With the above notation, $\alpha$ is free, that is, for any $n\in\BZ\setminus\{0\}$,
$\alpha^n$ satisfies the following: If $x\in M(A\otimes\BK)$ satisfies that $xa=\alpha^n (a)x$ for any
$a\in A\otimes\BK$, then $x=0$.
\end{lemma}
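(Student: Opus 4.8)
The plan is to reduce the freeness of $\alpha$ to the triviality of the relative commutant of $A\otimes\BK$ inside the multiplier algebra of the crossed product, which is available from the standing hypothesis. First I would record that, exactly as in Section \ref{sec:SM}, the isomorphism of inclusions $A\otimes\BK\subset(A\otimes\BK)\rtimes_{\alpha}\BZ\cong A\otimes\BK\subset(A\rtimes_X\BZ)\otimes\BK$ furnished by Lemma \ref{lem:pre5}, combined with \cite[Lemma 3.1]{Kodaka:countable} and the assumption $A'\cap(A\rtimes_X\BZ)=\BC 1$, yields
$$
(A\otimes\BK)'\cap M((A\otimes\BK)\rtimes_{\alpha}\BZ)=\BC 1 .
$$
I would also invoke the standard facts about a discrete crossed product, namely that $M(A\otimes\BK)$ embeds in $M((A\otimes\BK)\rtimes_{\alpha}\BZ)$ and that $\Ad(u_{\alpha})$ extends $\alpha$ to $M(A\otimes\BK)$, so that $u_{\alpha}^n a u_{\alpha}^{-n}=\alpha^n(a)$ for all $a\in A\otimes\BK$ and all $n\in\BZ$.

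Next, fix $n\in\BZ\setminus\{0\}$ and let $x\in M(A\otimes\BK)$ satisfy $xa=\alpha^n(a)x$ for every $a\in A\otimes\BK$. I would set $w=u_{\alpha}^{-n}x\in M((A\otimes\BK)\rtimes_{\alpha}\BZ)$ and compute, for $a\in A\otimes\BK$,
$$
wa=u_{\alpha}^{-n}xa=u_{\alpha}^{-n}\alpha^n(a)x=u_{\alpha}^{-n}(u_{\alpha}^n a u_{\alpha}^{-n})x=a\,u_{\alpha}^{-n}x=aw .
$$
Thus $w$ lies in the relative commutant displayed above, so $w=c1$ for some $c\in\BC$, whence $x=u_{\alpha}^n w=c\,u_{\alpha}^n$. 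The point is that the intertwining relation forces $x$ to be a scalar multiple of a single power of $u_{\alpha}$.

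Finally, I would separate degrees by the dual action. Let $\rho$ be the canonical $\BT$-action on $(A\otimes\BK)\rtimes_{\alpha}\BZ$ with $\rho_z|_{A\otimes\BK}=\id$ and $\rho_z(u_{\alpha})=z u_{\alpha}$, and let $\underline{\rho_z}$ denote its strictly continuous extension to the multiplier algebra. Since $\rho_z$ fixes $A\otimes\BK$ pointwise and $A\otimes\BK$ acts non-degenerately, $\underline{\rho_z}$ fixes every element of $M(A\otimes\BK)$; in particular $\underline{\rho_z}(x)=x$, while $\underline{\rho_z}(c\,u_{\alpha}^n)=c\,z^n u_{\alpha}^n=z^n x$. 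Hence $(1-z^n)x=0$ for all $z\in\BT$, and choosing $z$ with $z^n\neq 1$ (possible because $n\neq 0$) gives $x=0$. Equivalently, one may apply the canonical conditional expectation $E$ onto $A\otimes\BK$: from $x=c\,u_{\alpha}^n$ one has $axb=c\,a\,\alpha^n(b)u_{\alpha}^n$ for $a,b\in A\otimes\BK$, so $E(axb)=0$, whereas $axb\in A\otimes\BK$ forces $E(axb)=axb$; therefore $axb=0$ for all $a,b$, and $x=0$ by an approximate-unit argument.

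The step I expect to demand the most care is the first: making precise that the relative commutant in question must be taken in the multiplier algebra, that $M(A\otimes\BK)$ sits inside $M((A\otimes\BK)\rtimes_{\alpha}\BZ)$ compatibly with $\Ad(u_{\alpha})$, and that the dual action fixes $M(A\otimes\BK)$ (which rests on the non-degeneracy $\overline{(A\otimes\BK)\big((A\otimes\BK)\rtimes_{\alpha}\BZ\big)}=(A\otimes\BK)\rtimes_{\alpha}\BZ$). Once these structural facts are in place, the computations in the remaining two paragraphs are routine.
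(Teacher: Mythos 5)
Your proof is correct, and its first half is exactly the paper's: you combine the isomorphism of inclusions from Lemma \ref{lem:pre5} with \cite[Lemma 3.1]{Kodaka:countable} and the hypothesis $A'\cap(A\rtimes_X\BZ)=\BC 1$ to obtain $(A\otimes\BK)'\cap M((A\otimes\BK)\rtimes_{\alpha}\BZ)=\BC 1$. Where you diverge is the finish: the paper concludes in one line by citing \cite[Corollary 4.2]{Kodaka:countable}, which asserts precisely that triviality of this relative commutant implies freeness, whereas you prove that implication directly in the $\BZ$-case. Your mechanism is sound: from $xa=\alpha^n(a)x$ and $\alpha^n=\Ad(u_{\alpha}^n)$ on $M(A\otimes\BK)$, the element $w=u_{\alpha}^{-n}x$ lands in the relative commutant, forcing $x=c\,u_{\alpha}^n$; then the dual action (which indeed fixes $M(A\otimes\BK)$ pointwise, by the nondegeneracy $\overline{(A\otimes\BK)\bigl((A\otimes\BK)\rtimes_{\alpha}\BZ\bigr)}=(A\otimes\BK)\rtimes_{\alpha}\BZ$ that you correctly identify as the delicate structural point) or, alternatively, the canonical conditional expectation annihilates the degree-$n$ component for $n\neq 0$, giving $x=0$. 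In effect you have inlined a proof of the cited corollary, specialized from twisted actions of countable discrete groups to untwisted $\BZ$-actions. What each approach buys: the paper's proof is two citations and covers the general setting needed elsewhere in \cite{Kodaka:countable}; yours is self-contained, elementary, and makes visible why the relative commutant condition is the right hypothesis, at the modest cost of having to verify the structural facts about $M(A\otimes\BK)\subset M((A\otimes\BK)\rtimes_{\alpha}\BZ)$, the implementing unitary, and the extension of the dual action, all of which are standard for discrete crossed products.
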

\begin{proof} Since $A' \cap (A\rtimes_X \BZ)=\BC1$, by \cite [Lemma 3.1]{Kodaka:countable}
$(A\otimes\BK)' \cap M((A\rtimes_X \BZ)\otimes\BK)=\BC1$. Hence since $A\otimes\BK\subset
(A\rtimes_X \BZ)\otimes\BK$ is isomorphic to $A\otimes\BK\subset(A\otimes\BK)\rtimes_{\alpha}\BZ$
as inclusions
of $C^*$-algebras,
$$
(A\otimes\BK)' \cap M((A\otimes\BK)\rtimes_{\alpha}\BZ)=\BC1 .
$$Thus by \cite [Corollary 4.2]{Kodaka:countable}, $\alpha$ is free.
\end{proof}

For any $n\in\BZ$, let $\delta_n$ be the function on $\BZ$ defined by
$$
\delta_n (m) =\begin{cases} 1 & m=n \\
0 & m\ne n \end{cases} .
$$
We regard $\delta_n$ as an element in $M((A\otimes\BK)\rtimes_{\alpha}\BZ)$. Let $E^{M(A\otimes\BK)}$
be the faithful conditional expectation from $M(A\otimes\BK)\rtimes_{\underline{\alpha}}\BZ$ onto
$M(A\otimes\BK)$ defined in B\'edos and Conti \cite [Section 3]{BC:discrete}. Let
$E^{A\otimes\BK}=E^{M(A\otimes\BK)}|_{(A\otimes\BK)\rtimes_{\alpha}\BZ}$. Let $\{u_i \}_{i\in I}$ be
an approximate unit of $A\otimes\BK$. We fix the approximate unit $\{u_i \}_{i\in I}$ of $A\otimes\BK$.
For any $x\in M((A\otimes\BK)\rtimes_{\alpha}\BZ)$, we define the Fourier coefficient of $x$ at $n\in\BZ$
and the Fourier series of $x$ as in \cite [Section 2]{Kodaka:countable}. We show that
$\Aut_0 (A\otimes\BK, (A\otimes\BK)\rtimes_{\alpha}\BZ)\cong\BT$.
\par
Let $\beta\in \Aut_0 (A\otimes\BK, (A\otimes\BK)\rtimes_{\alpha}\BZ)$. For any $a\in A\otimes\BK$,
$$
\underline{\beta}(\delta_1 )a\underline{\beta}(\delta_1^* )=\beta(\delta_1 a \delta_1^* )=\beta(\alpha(a))
=\alpha(a) .
$$
Hence $\underline{\beta}(\delta_1 )a=\alpha(a)\underline{\beta}(\delta_1 )$ for any $a\in A\otimes\BK$.

\begin{lemma}\label{lem:P5} Let $\sum_{n\in\BZ}a_n \delta_1^n =\sum_{n\in\BZ}a_n \delta_n$ be the
Fourier series of $\underline{\beta}(\delta_1 )$, where $a_n \in M(A\otimes\BK)$ for any $n\in \BZ$.
Then for any $n\in \BZ$, $a\in A\otimes\BK$,
$$
a_n \alpha^{n-1}(a)=aa_n .
$$
\end{lemma}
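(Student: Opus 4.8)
The plan is to run a short Fourier-analytic argument whose only input is the relation established immediately before the lemma. Since $\beta$ restricts to the identity on $A\otimes\BK$ and $\delta_1 a\delta_1^* =\alpha(a)$, one already has $\underline{\beta}(\delta_1)a=\alpha(a)\underline{\beta}(\delta_1)$ for every $a\in A\otimes\BK$. The desired identity $a_n\alpha^{n-1}(a)=aa_n$ is simply this relation read off coefficient by coefficient, followed by a reindexing.

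First I would record the commutation rule in the crossed product: from $\delta_1 a=\alpha(a)\delta_1$ one obtains $\delta_n a=\alpha^n(a)\delta_n$ for every $n\in\BZ$ by iterating (and using $\delta_{-1}=\delta_1^*$). Substituting the Fourier series $\underline{\beta}(\delta_1)=\sum_{n}a_n\delta_n$ into the left-hand side and pushing $a$ through each $\delta_n$ gives $\underline{\beta}(\delta_1)a=\sum_n a_n\alpha^n(a)\delta_n$, so the $n$-th Fourier coefficient of $\underline{\beta}(\delta_1)a$ is $a_n\alpha^n(a)$. On the other hand $\alpha(a)\underline{\beta}(\delta_1)=\sum_n\alpha(a)a_n\delta_n$ has $n$-th Fourier coefficient $\alpha(a)a_n$. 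Comparing Fourier coefficients of the two equal elements yields $a_n\alpha^n(a)=\alpha(a)a_n$ for all $n\in\BZ$ and all $a\in A\otimes\BK$. Finally I would replace $a$ by $\alpha^{-1}(a)$; then $\alpha^n(\alpha^{-1}(a))=\alpha^{n-1}(a)$ and $\alpha(\alpha^{-1}(a))=a$, so the relation becomes $a_n\alpha^{n-1}(a)=aa_n$, which is the assertion.

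I expect the only genuine work to lie in the coefficient comparison, since $\underline{\beta}(\delta_1)$ and the $a_n$ are multipliers rather than elements of the crossed product itself. Concretely, one must check that for $a\in A\otimes\BK$ the $n$-th Fourier coefficient of the product $\underline{\beta}(\delta_1)a$ really is $a_n\alpha^n(a)$, and that equality of two multipliers forces equality of all of their Fourier coefficients. Both points are handled by the Fourier-coefficient machinery of \cite{Kodaka:countable}: the coefficients are computed through the faithful conditional expectation $E^{M(A\otimes\BK)}$ of B\'edos and Conti together with the fixed approximate unit $\{u_i\}_{i\in I}$, whence the passage to coefficients is injective. Right multiplication by $a\in A\otimes\BK$ is compatible with taking coefficients in the expected way because $a\delta_n=\delta_n\alpha^{-n}(a)$ keeps all the relevant factors inside $A\otimes\BK$, which is precisely where the approximate unit and the conditional expectation are adapted; note that no freeness of $\alpha$ is needed for this particular step.
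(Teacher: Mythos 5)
Your proposal is correct and follows essentially the same route as the paper: substitute the Fourier series of $\underline{\beta}(\delta_1)$ into the relation $\underline{\beta}(\delta_1)a=\alpha(a)\underline{\beta}(\delta_1)$, compare coefficients to get $a_n\alpha^n(a)=\alpha(a)a_n$, and replace $a$ by $\alpha^{-1}(a)$ (a step the paper leaves implicit). Your added care about why coefficient comparison is legitimate for multipliers --- via the faithful conditional expectation of B\'edos--Conti and the machinery of \cite{Kodaka:countable}, with the correct observation that freeness of $\alpha$ is not needed here --- only makes explicit what the paper takes for granted.
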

\begin{proof} For any $a\in A\otimes\BK$, $\underline{\beta}(\delta_1 )a=\alpha(a)\underline{\beta}(\delta_1 )$.
Hence the Fourier series of $\underline{\beta}(\delta_1 )a$ is:
$$
\sum_{n\in\BZ}a_n \alpha^n (a)\delta_1^n .
$$
Also, the Fourier series of $\alpha(a)\underline{\beta}(\delta_1 )$ is:
$$
\sum_{n\in\BZ}\alpha(a)a_n \delta_1^n .
$$
Thus $a_n \alpha^n (a)=\alpha(a)a_n $ for any $a\in A\otimes\BK$, $n\in \BZ$.
Therefore, we obtain the conclusion.
\end{proof}

\begin{lemma}\label{lem:P6} With the above notation,
$$
\Aut_0 (A\otimes\BK, (A\otimes\BK)\rtimes_{\alpha}\BZ)\cong\BT .
$$
\end{lemma}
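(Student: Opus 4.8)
The plan is to show that $\Aut_0(A\otimes\BK, (A\otimes\BK)\rtimes_\alpha\BZ) \cong \BT$ by constructing an explicit isomorphism. The idea is that any $\beta$ fixing $A\otimes\BK$ pointwise is completely determined by its value on the generating element $\delta_1$, and the freeness of $\alpha$ (Lemma \ref{lem:P4}) together with Lemma \ref{lem:P5} will force $\underline{\beta}(\delta_1)$ to be a scalar multiple of $\delta_1$. More precisely, writing the Fourier series $\underline{\beta}(\delta_1) = \sum_n a_n \delta_1^n$ as in Lemma \ref{lem:P5}, the relation $a_n \alpha^{n-1}(a) = a a_n$ for all $a \in A\otimes\BK$ says that $a_n$ intertwines $\alpha^{n-1}$ with the identity. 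For $n \ne 1$, this means $a_n$ satisfies the freeness condition for $\alpha^{n-1}$ (with $n-1 \ne 0$), so $a_n = 0$. Hence $\underline{\beta}(\delta_1) = a_1 \delta_1$, and for $n=1$ the relation reads $a_1 a = a a_1$, i.e.\ $a_1 \in (A\otimes\BK)' \cap M((A\otimes\BK)\rtimes_\alpha\BZ) = \BC 1$. So $a_1$ is a scalar $\lambda$.

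Next I would pin down which scalars arise. Since $\underline{\beta}(\delta_1)$ must be a unitary in the multiplier algebra (as $\beta$ is an automorphism and $\delta_1$ is unitary), and $a_1 = \lambda 1$, unitarity of $\lambda\delta_1$ forces $|\lambda| = 1$, so $\lambda \in \BT$. Conversely, for each $\lambda \in \BT$ one checks that the map sending $\delta_1 \mapsto \lambda\delta_1$ and fixing $A\otimes\BK$ extends to an automorphism $\beta_\lambda \in \Aut_0(A\otimes\BK, (A\otimes\BK)\rtimes_\alpha\BZ)$; this is the dual-action type construction and follows from the universal property of the crossed product together with the covariance relation. I would then define the map
$$
\Psi : \Aut_0(A\otimes\BK, (A\otimes\BK)\rtimes_\alpha\BZ) \to \BT, \qquad \Psi(\beta) = \lambda,
$$
where $\underline{\beta}(\delta_1) = \lambda\delta_1$, and verify that it is a group homomorphism. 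Indeed, if $\underline{\beta}(\delta_1) = \lambda\delta_1$ and $\underline{\beta'}(\delta_1) = \lambda'\delta_1$, then since $\beta'$ fixes $A\otimes\BK$ and is strictly continuous on multipliers, $\underline{\beta'}(\lambda\delta_1) = \lambda\underline{\beta'}(\delta_1) = \lambda\lambda'\delta_1$, giving $\Psi(\beta'\circ\beta) = \lambda\lambda' = \Psi(\beta)\Psi(\beta')$.

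Finally I would assemble injectivity and surjectivity: $\Psi$ is injective because $\Psi(\beta) = 1$ means $\underline{\beta}(\delta_1) = \delta_1$, and since $\beta$ fixes $A\otimes\BK$ and $\delta_1$, it fixes the dense subalgebra generated by them, hence $\beta = \id$; and $\Psi$ is surjective by the existence of $\beta_\lambda$ just described. Therefore $\Psi$ is an isomorphism and the claim follows. \textbf{The main obstacle} I anticipate is the careful handling of the multiplier algebra and strict continuity: the element $\delta_1$ lives in $M((A\otimes\BK)\rtimes_\alpha\BZ)$, not in the algebra itself, so all the Fourier-series manipulations and the application of freeness must be justified at the multiplier level, using the conditional expectation $E^{M(A\otimes\BK)}$ and the Fourier-coefficient apparatus from \cite{Kodaka:countable} that was set up just before Lemma \ref{lem:P5}. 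The step extracting $a_n = 0$ for $n \ne 1$ from Lemma \ref{lem:P4} requires knowing the Fourier coefficients $a_n$ genuinely lie in $M(A\otimes\BK)$ and that the intertwining relation is exactly the freeness hypothesis for $\alpha^{n-1}$, which should be the technical heart of the argument.
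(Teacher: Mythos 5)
Your proposal is correct and follows essentially the same route as the paper: expand $\underline{\beta}(\delta_1)$ in its Fourier series, use Lemma \ref{lem:P5} together with the freeness of $\alpha$ (Lemma \ref{lem:P4}) to kill all coefficients $a_n$ with $n\ne 1$, conclude $a_1$ commutes with $A\otimes\BK$ and hence is a scalar of modulus one by triviality of the relative commutant and unitarity, and let the assignment $\beta\mapsto\lambda$ give the isomorphism with $\BT$. The only difference is that you make explicit what the paper dismisses as ``routine computations'' --- surjectivity via the gauge (dual) action $\delta_1\mapsto\lambda\delta_1$ and injectivity via density of the subalgebra generated by $A\otimes\BK$ and $\delta_1$ --- which is a welcome addition rather than a deviation.
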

\begin{proof} Let $\beta\in \Aut_0 (A\otimes\BK, (A\otimes\BK)\rtimes_{\alpha}\BZ)$ and
let $\sum_{n\in\BZ}a_n \delta_1^n$ be the Fourier series of $\underline{\beta}(\delta_1 )$.
Then by Lemma \ref{lem:P5}, $a_n \alpha^{n-1}(a)=aa_n$ for any $a\in A\otimes\BK$, $n\in \BZ$. Since
$\alpha$ is free by Lemma \ref {lem:P4}, $a_n =0$ for any $n\in\BZ\setminus\{1\}$. Thus
$\underline{\beta}(\delta_1 )=a_1 \delta_1$.
Since $\underline{\beta}(\delta_1 )a\underline{\beta}(\delta_1^* )=\alpha(a)$ for any $a\in A\otimes\BK$,
$$
a_1 \delta_1 a \delta_1^* a_1^* =\alpha(a) .
$$
Since $\delta_1 a\delta_1^* =\alpha(a)$,
$$
a_1 \alpha(a)a_1^* =\alpha(a)
$$
for any $a\in A\otimes\BK$. Since $\delta_1$ and $\underline{\beta}(\delta_1 )$ are unitary elements in
$M((A\otimes\BK)\rtimes_{\alpha}\BZ)$, $a_1$ is  a unitary element in $M(A\otimes\BK)$. Thus
$$
a_1 \alpha(a)=\alpha(a)a_1
$$
for any $a\in A\otimes\BK$. Since $(A\otimes\BK)' \cap M(A\otimes\BK)=\BC1$,
$a_1 \in \BC1$. Since $a_1$ is a unitary element in $M(A\otimes\BK)$, there is the unique element
$c_{\beta}\in\BT$, such that $a_1 =c_{\beta}1$. Let $\epsilon$ be the map from
$\Aut_0 (A\otimes\BK, (A\otimes\BK)\rtimes_{\alpha}\BZ)$ onto $\BT$ defined by $\epsilon(\beta)=c_{\beta}$.
By routine computations, we can see that $\epsilon$ is an isomorphism of
$\Aut_0 (A\otimes\BK, (A\otimes\BK)\rtimes_{\alpha}\BZ)$ onto $\BT$.
\end{proof}

\begin{lemma}\label{lem:P7} With the above notation,
$$
\Aut_0 (A, A\rtimes_X \BZ)\cong\Aut_0 (A\otimes\BK, (A\otimes\BK)\rtimes_{\alpha}\BZ) .
$$
\end{lemma}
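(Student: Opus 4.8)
The plan is to factor the desired isomorphism through the stabilized inclusion $A\otimes\BK\subset(A\rtimes_X\BZ)\otimes\BK$. Writing $C=A\rtimes_X\BZ$ for brevity, I would establish the two isomorphisms
$$
\Aut_0(A,C)\cong\Aut_0(A\otimes\BK,C\otimes\BK)\cong\Aut_0(A\otimes\BK,(A\otimes\BK)\rtimes_\alpha\BZ)
$$
and then compose them. The second isomorphism is the easy one. Composing the two isomorphisms of inclusions recalled just before the lemma, $A\otimes\BK\subset(A\rtimes_X\BZ)\otimes\BK$ is isomorphic as an inclusion of $C^*$-algebras to $A\otimes\BK\subset(A\otimes\BK)\rtimes_\alpha\BZ$, say via an isomorphism $\Theta$ with $\Theta(A\otimes\BK)=A\otimes\BK$. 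Conjugation $\gamma\mapsto\Theta\circ\gamma\circ\Theta^{-1}$ is an isomorphism of the two full automorphism groups, and since $\Theta$ maps the distinguished subalgebra $A\otimes\BK$ onto $A\otimes\BK$, it carries those automorphisms fixing $A\otimes\BK$ pointwise exactly onto those fixing $A\otimes\BK$ pointwise; hence it restricts to an isomorphism of the two $\Aut_0$ groups. This works even though $\Theta|_{A\otimes\BK}$ need not be the identity, because membership in $\Aut_0$ depends only on fixing the subalgebra pointwise, a condition preserved under conjugation by an isomorphism of inclusions.

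For the first isomorphism I would use the map $\beta\mapsto\beta\otimes\id_\BK$ (well defined since $\BK$ is nuclear). If $\beta\in\Aut_0(A,C)$ then $\beta\otimes\id$ is an automorphism of $C\otimes\BK$ with $(\beta\otimes\id)|_{A\otimes\BK}=\id$, so it lies in $\Aut_0(A\otimes\BK,C\otimes\BK)$, and this assignment is clearly an injective group homomorphism. The substance is surjectivity. Here I would exploit that $A$, and hence $C$, is unital, so that $p=1_A\otimes e_{11}$ is a projection lying in $A\otimes\BK$ and $p(C\otimes\BK)p=C\otimes e_{11}\cong C$. Given $\gamma\in\Aut_0(A\otimes\BK,C\otimes\BK)$, since $\gamma$ fixes $A\otimes\BK$ pointwise it fixes all the matrix units $1_A\otimes e_{ij}$, in particular $\gamma(p)=p$; hence $\gamma$ restricts to an automorphism of the corner $C\otimes e_{11}$, which under the identification $C\otimes e_{11}\cong C$ defines an automorphism $\beta$ of $C$. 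Because $\gamma$ fixes $A\otimes e_{11}$ pointwise, $\beta|_A=\id$, so $\beta\in\Aut_0(A,C)$.

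It then remains to check $\gamma=\beta\otimes\id$. For this I would write $c\otimes e_{ij}=(1_A\otimes e_{i1})(c\otimes e_{11})(1_A\otimes e_{1j})$ and apply $\gamma$: since $\gamma$ fixes $1_A\otimes e_{i1}$ and $1_A\otimes e_{1j}$ and sends $c\otimes e_{11}$ to $\beta(c)\otimes e_{11}$, we get $\gamma(c\otimes e_{ij})=\beta(c)\otimes e_{ij}=(\beta\otimes\id)(c\otimes e_{ij})$. As the linear span of $\{c\otimes e_{ij}\}$ is dense in $C\otimes\BK$, this forces $\gamma=\beta\otimes\id$, proving surjectivity; composing the two isomorphisms then yields the claim. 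I expect the surjectivity of $\beta\mapsto\beta\otimes\id$ — that is, reconstructing $\gamma$ from its restriction to the corner by means of the matrix units — to be the main point, while everything else is formal. The unitality of $A$ is precisely what makes the corner argument clean, since $1_A\otimes e_{ij}$ already lies in $A\otimes\BK$ and no passage to multiplier algebras is needed.
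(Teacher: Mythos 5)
Your proof is correct and takes essentially the same route as the paper: the paper also reduces via the isomorphism of inclusions to showing $\Aut_0 (A, A\rtimes_X \BZ)\cong \Aut_0 (A\otimes\BK, (A\rtimes_X \BZ)\otimes\BK)$, and proves surjectivity of $\beta\mapsto\beta\otimes\id_{\BK}$ by exactly your corner argument, using that $\gamma$ fixes the matrix units $1\otimes e_{ij}$ (available in $A\otimes\BK$ since $A$ is unital in this section) to restrict $\gamma$ to $(A\rtimes_X \BZ)\otimes e_{11}$ and then recover $\gamma$ on all of $(A\rtimes_X \BZ)\otimes\BK$ by density. Your explicit justification of the conjugation step for the second isomorphism merely spells out what the paper leaves tacit in its opening ``it suffices to show''.
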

\begin{proof} Since $A\otimes\BK\subset(A\otimes\BK)\rtimes_{\alpha}\BZ$ and
$A\otimes\BK\subset(A\rtimes_X\BZ)\otimes\BK$ are isomorphic as inclusions of $C^*$-algebras,
it suffices to show that
$$
\Aut_0 (A, A\rtimes_X \BZ)\cong \Aut_0 (A\otimes\BK, (A\rtimes_X \BZ)\otimes\BK) .
$$
Let $\kappa$ be the homomorphism of $\Aut_0 (A, A\rtimes_X \BZ)$ to
$\Aut_0 (A\otimes\BK, (A\rtimes_X \BZ)\otimes\BK)$ defined by
$$
\kappa(\beta)=\beta\otimes\id_{\BK}
$$
for any $\beta\in\Aut_0 (A, A\rtimes_X \BZ)$. Then it is clear that $\kappa$ is a monomorphism of
$\Aut_0 (A, A\rtimes_X \BZ)$ to $\Aut_0 (A\otimes\BK, (A\rtimes_X \BZ)\otimes\BK)$.
We show that $\kappa$ is surjective. Let $\gamma\in \Aut_0 (A\otimes\BK, (A\rtimes_X \BZ)\otimes\BK)$.
Then
$$
\gamma(a\otimes e_{ij})=a\otimes e_{ij}
$$
for any $a\in A$, $i, j \in \BN$. Thus
$$
\gamma(x\otimes e_{11})=(1\otimes e_{11})\gamma(x\otimes e_{11})(1\otimes e_{11})
$$
for any $x\in A\rtimes_X \BZ$. Hence there is an automorphism $\alpha$ of $A\rtimes_X \BZ$ such that
$$
\gamma(x\otimes e_{11})=\alpha(x)\otimes e_{11}
$$
for any $x\in A\rtimes_X \BZ$. For any $i, j\in \BN$, $x\in A\rtimes_X \BZ$,
\begin{align*}
\gamma(x\otimes e_{ij}) & =\gamma((1\otimes e_{11})(x\otimes e_{11})(1\otimes e_{1j}))
=(1\otimes e_{i1})(\alpha(x)\otimes e_{11})(1\otimes e_{1j}) \\
& =\alpha(x)\otimes e_{ij} .
\end{align*}
Especially , if $a\in A$, $\alpha(a)\otimes e_{ij}=\gamma(a\otimes e_{ij})=a\otimes e_{ij}$ for any $i, j\in \BN$.
Thus $\alpha(a)=a$ for any $a\in A$. Therefore, $\gamma=\alpha\otimes\id_{\BK}$ and
$\alpha\in \Aut_0 (A, A\rtimes_X \BK)$. Hence we can see that $\kappa$ is surjective.
\end{proof}

\begin{lemma}\label{lem:P8} With the above notation, $\Ker f_A \cong \BT$.
\end{lemma}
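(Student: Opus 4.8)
The plan is to obtain $\Ker f_A \cong \BT$ by composing the three group isomorphisms that have already been established in the immediately preceding lemmas, so that essentially no new work is required. First I would invoke Lemma \ref{lem:P3}, which gives $\Ker f_A \cong \Aut_0 (A, A\rtimes_X \BZ)$. Next I would apply Lemma \ref{lem:P7} to transport this to $\Aut_0 (A\otimes\BK, (A\otimes\BK)\rtimes_{\alpha}\BZ)$. Finally, Lemma \ref{lem:P6} identifies this last group with $\BT$. Chaining the three isomorphisms yields
$$
\Ker f_A \cong \Aut_0 (A, A\rtimes_X \BZ) \cong \Aut_0 (A\otimes\BK, (A\otimes\BK)\rtimes_{\alpha}\BZ) \cong \BT ,
$$
which is exactly the assertion.

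The one point that needs to be checked for coherence, rather than any genuine difficulty, is that the automorphism $\alpha$ of $A\otimes\BK$ appearing in Lemmas \ref{lem:P6} and \ref{lem:P7} is the same one throughout. This is guaranteed by the setup fixed just before Lemma \ref{lem:P4}: $\alpha$ is chosen (via Lemma \ref{lem:pre5}) so that the inclusions $A\otimes\BK\subset (A\otimes\BK)\rtimes_{\alpha}\BZ$ and $A\otimes\BK\subset (A\rtimes_X \BZ)\otimes\BK$ are isomorphic as inclusions of $C^*$-algebras, and Lemma \ref{lem:P7} uses precisely this identification. Since all three lemmas refer to the same $\alpha$ and the same inclusion data, the composite isomorphism is well defined, and there is no obstacle to overcome; the bulk of the argument has already been discharged in proving Lemmas \ref{lem:P4}--\ref{lem:P7}, in particular the freeness of $\alpha$ and the condition $A' \cap (A\rtimes_X \BZ)=\BC 1$ which forced the Fourier coefficient $a_1$ to be scalar in Lemma \ref{lem:P6}. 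Thus I expect the proof to consist of a single line citing Lemmas \ref{lem:P3}, \ref{lem:P7} and \ref{lem:P6}.
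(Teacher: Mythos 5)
Your proposal is correct and matches the paper's proof exactly: the paper also obtains $\Ker f_A \cong \BT$ by citing Lemmas \ref{lem:P3}, \ref{lem:P6} and \ref{lem:P7} in a single line, with the same fixed automorphism $\alpha$ throughout. Your added remark verifying that the same $\alpha$ is used in all three lemmas is a sensible consistency check but introduces nothing beyond the paper's argument.
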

\begin{proof} This is immediate by Lemmas \ref {lem:P3}, \ref {lem:P6} and \ref {lem:P7}.
\end{proof}

By Lemmas \ref {lem:P1} and \ref {lem:P8}, we have the following exact sequence:
$$
1\longrightarrow\BT\longrightarrow\Pic(A, A\rtimes_X \BZ)\longrightarrow G\longrightarrow 1,
$$
where
\begin{align*}
G=\{[M]\in\Pic(A) \, | \, X\cong\widetilde{M}\otimes_A X\otimes_A M \, \text{or} \,
\widetilde{X} & \cong\widetilde{M}\otimes_A X\otimes_A M \\
& \text{as $A-A$-equivalence bimodules}\}
\end{align*}
Let $g$ be the map from $G$ to $\Pic(A, A\rtimes_X \BZ)$ defined by
$$
g([M])=[M, N] ,
$$
where $N$ is the $A\rtimes_X \BZ -A\rtimes_X \BZ$-equivalence bimodule defined in the proof of
Lemma \ref {lem:pre4}. Then $g$ is a homomorphism of $G$ to $\Pic(A, A\rtimes_X \BZ)$ such that
$$
f_A \circ g=\id
$$
on $G$. Thus we obtain the following:

\begin{thm}\label{thm:P9} Let $A$ be a unital $C^*$-algebra and $X$ an $A-A$-equivalence bimodule.
Let $A\subset A\rtimes_X \BZ$ be the unital inclusion of unital $C^*$-algebras induced by $X$.
We suppose that $A' \cap (A\rtimes_X \BZ)=\BC1$. Then $\Pic(A, A\rtimes_X \BZ)$ is a semi-direct
product of $G$ by $\BT$.
\end{thm}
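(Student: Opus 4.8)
The plan is to recognize the statement as a purely group-theoretic consequence of the exact sequence and splitting map already assembled in the discussion preceding it. First I would record that Lemmas \ref{lem:P1} and \ref{lem:P8} together furnish the short exact sequence of (not necessarily abelian) groups
$$
1 \longrightarrow \BT \longrightarrow \Pic(A, A\rtimes_X \BZ) \stackrel{f_A}{\longrightarrow} G \longrightarrow 1 ,
$$
in which the kernel of $f_A$, identified with $\BT$ via Lemma \ref{lem:P8}, is automatically a normal subgroup of $\Pic(A, A\rtimes_X \BZ)$, while the image of $f_A$ is exactly $G$ by Lemma \ref{lem:P1}.

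Next I would invoke the map $g : G \to \Pic(A, A\rtimes_X \BZ)$, $g([M]) = [M, N]$, already exhibited as a homomorphism satisfying $f_A \circ g = \id$ on $G$. This makes $g$ a group-theoretic section of $f_A$, so the exact sequence splits. I would then apply the standard correspondence between split extensions and semidirect products: since $f_A \circ g = \id$, the section $g$ is injective with $g(G) \cong G$; moreover $\Ker f_A \cap g(G) = \{1\}$ (if $x = g(y) \in \Ker f_A$ then $y = f_A(g(y)) = f_A(x) = 1$), and every $z \in \Pic(A, A\rtimes_X \BZ)$ factors as $z = (z\, g(f_A(z))^{-1})\, g(f_A(z))$ with the first factor in $\Ker f_A$. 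Because $\Ker f_A \cong \BT$ is normal while $g(G)$ need not centralize it, this expresses $\Pic(A, A\rtimes_X \BZ)$ as the semidirect product of $G$ by $\BT$, with $\BT = \Ker f_A$ the normal factor and the action of $G$ on $\BT$ given by conjugation through the section $g$.

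The genuine content of the argument lies entirely in the earlier lemmas rather than in the theorem itself: Lemma \ref{lem:P8} for $\Ker f_A \cong \BT$ (resting on the freeness of $\alpha$ from Lemma \ref{lem:P4} and the Fourier-coefficient analysis of Lemmas \ref{lem:P5}, \ref{lem:P6}, and \ref{lem:P7}), Lemma \ref{lem:P1} for surjectivity onto $G$ (via Theorem \ref{thm:SM6} and Lemma \ref{lem:pre4}), and the construction of $g$. Accordingly, the only point one must still verify within the present proof is that $g$ is a well-defined homomorphism, i.e.\ that $N$ is determined up to equivalence-bimodule isomorphism by $[M]$ and that $g$ respects the group operation; granting this, the semidirect-product conclusion is formal. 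Thus the main obstacle is not in \ref{thm:P9} proper but in confirming the homomorphism property of the section $g$, after which the result is an immediate application of the splitting of the exact sequence.
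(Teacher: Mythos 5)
Your proposal is correct and follows essentially the same route as the paper: the paper likewise derives Theorem \ref{thm:P9} formally from the exact sequence $1\to\BT\to\Pic(A, A\rtimes_X \BZ)\to G\to 1$ supplied by Lemmas \ref{lem:P1} and \ref{lem:P8}, split by the section $g([M])=[M,N]$ with $f_A\circ g=\id$, offering no further argument beyond this. If anything you are slightly more scrupulous than the paper, which asserts without proof that $g$ is a well-defined homomorphism — the very point you correctly single out as the remaining verification.
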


\end{document}